\documentclass[11pt]{article}
\usepackage[T1]{fontenc}
\usepackage[utf8]{inputenc}
\usepackage{amsmath,amssymb,amsthm,mathtools}
\usepackage{geometry}
\usepackage{hyperref}
\newcommand{\N}{\mathbb{N}}
\newcommand{\Z}{\mathbb{Z}}

\newcommand{\R}{\mathbb{R}}
\newcommand{\C}{\mathbb{C}}
\newcommand{\HH}{\mathbb{H}}

\providecommand{\Baire}{\mathbb{N}^{\mathbb{N}}}
\providecommand{\dom}{\operatorname{dom}}
\providecommand{\tr}{\operatorname{tr}}
\providecommand{\rank}{\operatorname{rank}}
\providecommand{\im}{\operatorname{im}}
\providecommand{\diag}{\operatorname{diag}}
\providecommand{\Sym}{\operatorname{Sym}}
\providecommand{\Circ}{\operatorname{Circ}}
\providecommand{\Mult}{\rightrightarrows}
\providecommand{\GL}{\operatorname{GL}}
\providecommand{\End}{\operatorname{End}}

\newtheorem{theorem}{Theorem}[section]
\newtheorem{proposition}[theorem]{Proposition}
\newtheorem{lemma}[theorem]{Lemma}
\newtheorem{corollary}[theorem]{Corollary}
\theoremstyle{definition}\newtheorem{definition}[theorem]{Definition}
\theoremstyle{remark}
\theoremstyle{remark}\newtheorem{question}[theorem]{Question}
\theoremstyle{remark}\newtheorem{example}{Example}

\title{Uniform Computability of Isotypic Block Diagonalization for Finite-Group Representations\thanks{Preliminary results were presented at CCA 2026, Trier, Germany.}}
\author{Konrad Burnik\thanks{Independent researcher, Amsterdam, the Netherlands. Email: \texttt{kburnik@gmail.com}}}

\date{}

\begin{document}
\maketitle

\begin{abstract}
We study the universal isotypic block-diagonalization problem in the Type-2 theory of effectivity (TTE).
We obtain a uniform computability result for the universal block diagonalizer for commutants of given 
finite group representations. As an application, when the given representation is orthogonal, 
we compute ordinary $QR$ factorizations blockwise in the isotypic basis. After transforming 
back to the original coordinates this yields a factorization $A=QX$, where $Q$ is orthogonal 
and $X$ is generally not upper triangular. Moreover, given a Wedderburn-adapted orthogonal basis, 
the transformed-back factors can be computed within the commutant.
\end{abstract}

\medskip
\noindent\textbf{MSC2020:} 03D78; 20C15; 15A21; 15A23

\noindent\textbf{Keywords:} computable analysis; Type-2 theory of effectivity;
discrete advice; finite group representations; isotypic decomposition;
universal block diagonalization; commutant; blockwise QR factorization;
QX factorization

\section{Introduction}
\label{sec:intro}
We are motivated by the following known facts from classical linear algebra.
First, it is well known that any symmetric matrix can be diagonalized.
Thus, for all $n\in \N$ we have
\[
  \forall A \in \Sym_n(\R) \quad \exists U_A \in O(n) \quad U_A^T A U_A \mbox{ is a diagonal matrix},
\]
where $\Sym_n(\R)$ is the set of all real symmetric square matrices of order $n$ and $O(n)$ is the set of all orthogonal matrices of order $n$. Hence, for every symmetric matrix $A$ we can find an orthogonal matrix that diagonalizes it. This orthogonal matrix depends on $A$: the columns of $U_A$ are eigenvectors of $A$.
Second, it is well known that the class of complex circulant matrices is \emph{simultaneously diagonalized by the same change of basis}.
Hence, for all $n\in \N$ we have
\[
  \exists U \in U(n)\quad \forall A \in \Circ(n) \quad U^* AU \mbox{ is diagonal},
\]
where $\Circ(n)$ denotes the set of all circulant square matrices of
order $n$. Notice the reversal of the logical quantifiers: $\forall A \ \exists U_A$ versus $\exists U\ \forall A$.
Here, the matrix $U$ diagonalizes the whole class of (complex) circulant matrices. 
The matrix $U$ is the famous DFT matrix of order $n$. Crucially, the entries of this matrix $U$ 
do not depend on the entries of any of the circulant matrices. For the real version of the DFT, 
the best we can achieve is block-diagonalization: in that case $U^TAU$ is a block-diagonal 
matrix with $1\times1$ and $2\times2$ blocks. Thus, for all $n \in \N$ we have
\[
  \exists U \in O(n)\quad \forall A \in \Circ_\R(n)  \quad U^T A U \mbox{ is block diagonal}.
\]
Hence, there exists a \emph{universal} $U$ that block-diagonalizes all matrices from the class $\Circ_\R(n)$.

These observations motivate the following definition.
\begin{definition}
Let $\mathcal{A}$ be a class of $n \times n$ \textbf{real} matrices.
Let $k\geq 1$ and let $d_1,\dots,d_k\geq 1$ be integers such that $d_1+\dots+d_k=n$.
An invertible matrix $U$ is a \textbf{universal block diagonalizer} (UBD) for $\mathcal{A}$ with block sizes $(d_1,\dots,d_k)$ if $U^{-1} A U$ is block-diagonal with diagonal blocks of sizes $d_1,\dots,d_k$ for every $A \in \mathcal{A}$. If, moreover, $U \in O(n)$, we call $U$ an \textbf{orthogonal} universal block diagonalizer. In that case $U^{-1}AU = U^TAU$.
\end{definition}
The word \emph{universal} emphasizes that a single matrix $U$ works for all $A \in \mathcal{A}$.

\begin{example}[UBD for even-order centrosymmetric matrices]
  A $2n \times 2n$ matrix
  $A$ is centrosymmetric if and only if there exist $n \times n$ matrices
  $B$, $C$ such that
  \[
  A = \begin{pmatrix}
     B & C J \\
     JC & J B J
  \end{pmatrix},
 \]
 where $J$ is the $n\times n$ exchange matrix \cite{Weaver1985}.
 It is well known that the algebra of $2n \times  2n$ centrosymmetric matrices
 has an orthogonal universal block diagonalizer
 \[
    U = \frac{1}{\sqrt{2}}\begin{pmatrix}
        I & I \\
        J & -J
    \end{pmatrix},
 \]
 and $U^TAU = \diag(B+C,\, B-C)$.
 A similar example exists for centrosymmetric matrices of odd order.
\end{example}

An important obstruction in constructing a universal block diagonalizer is the
general non-computability of the rank. In the construction studied here, each block dimension is the rank of a corresponding projector. We will show that the finite-group structure allows these particular ranks to be recovered exactly.

We first explain how universal block diagonalization naturally stems from a specialized family of idempotents.

\begin{definition}[\cite{Koliha2001}]
    Let $P_i$, for $i\in\{1,\dots,k\}$, be such that
    \begin{enumerate}
        \item $P_i^2=P_i$ for all $i\in\{1,\dots,k\}$,
        \item $P_i P_j =0$ for all $i,j \in \{1,\dots, k\}$, $i \not = j$,
        \item $P_1 + \dots + P_k = I$.
    \end{enumerate}
    Then we say $\{P_1,\dots,P_k\}$ is a \emph{resolution of the unit matrix}.
\end{definition}

A resolution of the unit matrix block-diagonalizes its \emph{commutant}.
\begin{proposition}
  Let $\mathcal{P}=\{P_1,\dots,P_k\}$ be a resolution of the unit matrix.
  Let
  \[
    C(\mathcal{P}) = \{A:  A P_i = P_i A \text{ for all } i \}.
  \]
  Let $\mathcal{B}_i$ be a basis for $\im(P_i)$ for each $i=1,\dots,k$.
  Let $U$ be a matrix whose columns are the elements of $\mathcal{B}_1,\dots,\mathcal{B}_k$ in that order. Then
  $U$ is invertible and $U^{-1} A U$ is block-diagonal for each $A \in C(\mathcal{P})$.
  \label{prop:resolution}
\end{proposition}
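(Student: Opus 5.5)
The plan is to establish two facts in order: first that $\R^n$ (or $\C^n$) is the internal direct sum $\im(P_1)\oplus\dots\oplus\im(P_k)$, which gives invertibility of $U$; second that each $\im(P_i)$ is invariant under every $A\in C(\mathcal{P})$, which gives the block-diagonal form.

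For the direct sum, I would take any vector $x$ and use $P_1+\dots+P_k=I$ to write $x=P_1x+\dots+P_kx$, with each $P_ix\in\im(P_i)$. This shows the subspaces span the whole space. For uniqueness, suppose $y_1+\dots+y_k=0$ with $y_j\in\im(P_j)$. Each $y_j=P_jz_j$, so $P_iy_j=P_iP_jz_j$, which equals $0$ for $j\neq i$ and $P_i^2z_i=P_iz_i=y_i$ for $j=i$. Applying $P_i$ to the relation therefore gives $y_i=0$ for every $i$. Hence the concatenated bases $\mathcal{B}_1,\dots,\mathcal{B}_k$ form a basis of the whole space. In particular $\sum_i\dim\im(P_i)=n$, so $U$ is square with linearly independent columns and is invertible. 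The block sizes are $d_i=|\mathcal{B}_i|=\rank P_i$. If some $P_i=0$, the corresponding block is empty and can be dropped; I would remark on this in passing.

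For invariance, let $A\in C(\mathcal{P})$ and $y=P_iz\in\im(P_i)$. Then $Ay=AP_iz=P_iAz\in\im(P_i)$. Consequently $AU$ has columns $Au$ for $u\in\mathcal{B}_i$, and each such $Au$ is a linear combination of the vectors in $\mathcal{B}_i$ alone. Writing $AU=UM$, where $M=U^{-1}AU$ is the coordinate matrix, uniqueness of coordinates in the basis (from the first step) forces $M$ to be zero outside the diagonal blocks indexed by $\mathcal{B}_i\times\mathcal{B}_i$. So $M$ is block-diagonal with block sizes $d_1,\dots,d_k$.

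I do not expect a real obstacle here. The only point needing care is the uniqueness half of the direct-sum decomposition, which is exactly where the orthogonality relations $P_iP_j=0$ and idempotency are used. I will also state explicitly that $U^{-1}AU$ is read as coordinates with respect to the concatenated basis, so that invariance translates directly into zero off-diagonal blocks.
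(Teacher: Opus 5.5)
Your proposal is correct and follows essentially the same route as the paper: you get $\R^n=\im P_1\oplus\cdots\oplus\im P_k$ (hence invertibility of $U$) from $x=\sum_i P_ix$, with uniqueness coming from $P_iP_j=0$ and $P_i^2=P_i$, and you get block-diagonality from the invariance $A(\im P_i)\subseteq\im P_i$, which follows from $AP_i=P_iA$. The only difference is that you spell out the uniqueness argument and the coordinate translation $AU=UM$ in more detail than the paper does.
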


\begin{proof}
  Every $x$ satisfies $x = \sum_i P_i x$, and this decomposition is unique since $P_iP_j=0$ for all $i\neq j$ and $P_i^2=P_i$. Hence $\R^n = \im P_1 \oplus \cdots \oplus \im P_k$, and $U$ is invertible. If $AP_i = P_iA$, then $A(\im P_i) = AP_i\R^n = P_iA\R^n \subseteq \im P_i$. Thus every $\im P_i$ is invariant under $A$, and $U^{-1}AU$ is block-diagonal.
\end{proof}

Hence, if we find a resolution of the unit matrix, it will automatically block-diagonalize all matrices that commute with it.

In order to find the resolution of the unit matrix, we use finite group representation theory to construct it in a systematic way.

This approach will also solve the problem of finding a universal block diagonalizer which is \emph{computable in the Type-2 setting}. In particular, we
focus on the variant of this problem for matrix classes that are precisely the \emph{commutants of representations of finite groups}.

The reason why we focus on these is that various matrix classes used in numerical linear algebra can naturally be defined as commutants.
Some particular examples are centrosymmetric
matrices, whose symmetry comes from commuting with a representation of the group $\Z /2 \Z$,
and circulant matrices, which commute with a representation of the group $\Z / \ell \Z$.
More generally, we study matrices that arise as the commutant of
a finite group representation.
Some closely related classes of matrices, like \emph{skew-centrosymmetric} matrices, are defined via the more general notion of an \emph{intertwiner of a representation of a finite group}, and can be block-diagonalized with a pair
of matrices $U$, $V$. This is out of scope for this paper.

To proceed with answering the question of computability of a universal block diagonalizer
for commutants, we must also observe the following
known obstruction in Type-2 computability theory. Namely, it is well known that the rank is not
computable from Cauchy names of the input matrix. This stems from the fact that rank is a discontinuous function.
Hence, the rank of the matrix can be provided as \emph{discrete advice} (\cite{Ziegler2012}).

For obtaining the block sizes, it is necessary to know the dimensions of the corresponding invariant subspaces. Equivalently, one needs the ranks of certain projectors. In general, such rank information may have to be supplied as discrete advice. In the finite-group setting studied here, however, this discrete information need not be supplied externally. We show that it can be recovered uniformly from the algebraic structure of the input. Namely, the central idempotents of the group algebra are mapped to projectors whose images are the isotypic components. The ranks of these projectors equal their traces. Since these traces are integers, they can be recovered exactly from sufficiently accurate approximations. Thus the finite-group structure supplies the discrete information required to compute the block dimensions.

In particular, we study representations of finite groups and matrix classes defined as
commutants of such representations.

The representation theory of finite groups provides formulas for the required block sizes.
By the representation theory of finite groups, the block sizes are known to be integers and
we can recover them by approximating and rounding the results of these formulas.

Our main theorem is the Type-2 computability of the universal block diagonalization of the following form.
The universal block diagonalizer for the isotypic decomposition of the commutant is \emph{uniformly computable
from the given finite group information}.

As an application, we use the computability of the (orthogonal) universal block diagonalizer
of the commutant to compute ordinary $QR$ factorizations blockwise in isotypic coordinates for matrices from the commutant of
a given representation of a finite group.

Our main contribution is the following. Universal block diagonalization can fail to be computable from Cauchy names alone. Already for the semisimple algebras generated by a single symmetric matrix, universal block diagonalizers are not computable from the generator alone (Corollary~\ref{cor:RA}), even when the target block sizes are fixed. In the finite-group construction studied here, an additional obstruction is that the isotypic block sizes are ranks of projectors. We show that the group data supplies exactly the discrete information needed to recover these particular ranks: the block sizes are traces of the isotypic projectors, hence integers that can be recovered by rounding. Consequently, the universal block diagonalizer is uniformly Type-2 computable from the Cayley table of the group and Cauchy names of the representing matrices (Theorems~\ref{thm:ubd-main} and~\ref{thm:ubd-orth}). The underlying formulas from representation theory are classical. What is new is their analysis in the Type-2 setting.

The rest of this paper is organized as follows. In Section \ref{sec:related-work-and-scope}
we discuss the related work regarding effective linear algebra and set the scope for this paper.
In Section \ref{sec:type-2} we review the basic notions of the Type-2 Theory of Effectivity (TTE)
that are needed to state and prove our results. In Section \ref{sec:fin-group-repr} we recall the
notions from the representation theory of finite groups that will enable us to construct the required
projectors. In Section \ref{sec:comp-univ-iso-red} we state and prove our main result. In Section
\ref{sec:qr-fact-after-univ-red} we apply our main result by computing ordinary $QR$ factorizations blockwise in isotypic coordinates and then transforming the factors back. In Section \ref{sec:examples} we specialize this construction to centrosymmetric matrices and relate it to the structure-preserving $QX$ factorization introduced in \cite{Burnik2015}.
We dedicate Section \ref{sec:open-questions} to open questions and potential future directions of
this work. In Section \ref{sec:conclusion} we give a conclusion to this work, emphasizing the most
important points.

\section{Related work and scope}
\label{sec:related-work-and-scope}
In the work of Ziegler and Brattka (\cite{ZieglerBrattka2004}), the authors study Type-2 computability of classical problems in linear algebra such as solving a linear system, finding bases of the image and the kernel of a matrix, and calculating an orthonormal basis for a given space. The Type-2 Turing machine is the model of computation used in computable analysis, which takes infinite input and provides infinite output. In calculating with real numbers, it does so with arbitrary approximations of reals. It is well known that checking whether a real number (given by a Cauchy name) is zero is not possible in the Type-2 model. Various problems in linear algebra require checking such an equality, e.g., calculating the rank of a matrix. However, by work of Ziegler (\cite{Ziegler2012}), when discrete advice is supplied as additional input to the machine, these problems become computable.
Exact algorithms for group representations and related algebraic decompositions are known: for irreducible representations \cite{BabaiRonyai1990, Dabbaghian2005}, for character tables \cite{Dixon1967, Unger2006}, and for Wedderburn decompositions \cite{Eberly1989, OlteanuDelRio2009, Bremner2011}. These work with exact arithmetic. Here, the novelty is the use of a Type-2 machine, which works only with approximations. The character and projector formulas are classical (\cite{Isaacs1976, Serre1977}).
We use these known formulas in the Type-2 setting, which uses the Cauchy-name input model, to obtain computability of the isotypic decomposition, since the classical formulas themselves yield exact values of the needed block sizes.
The isotypic decomposition itself is classical. Our contribution is its computability analysis in the Type-2 setting.
A recent preprint by Hsieh and Kang \cite{Kang2026} uses the same central idempotents to project onto an isotypic component and then constructs explicit irreducible representations via an optimization problem. However, it does not address Type-2 computability.
Given only the isotypic basis, our blockwise $QR$ construction yields a transformed-back factorization $A=QX$. We do not claim that both transformed-back factors are from the
commutant in general: $Q$ is orthogonal, while $U^TXU$ is block diagonal with upper-triangular diagonal blocks. Given a Wedderburn-adapted orthogonal basis as additional input, both transformed-back factors lie in the commutant (Corollary~\ref{cor:qr-wedderburn}).
Our computability result for the block diagonalizer is limited to the isotypic decomposition. Computing a Wedderburn-adapted basis from the group data is future work.

\section{Type-2 preliminaries and discrete information}
\label{sec:type-2}
Given an arbitrary Cauchy-named matrix, the rank is not computable \cite[Theorem~7]{ZieglerBrattka2004}. It is easy to show this already for matrices
of size $1\times1$. Suppose $A = ( x )$, where $x$ is given by a Cauchy name. Then the rank of $A$ depends on whether
$x = 0$. From a Cauchy name of $x$ it is not possible, in general, to decide whether $x = 0$.
On the other hand, the property $x \neq 0$ is semidecidable.
The reason the rank function is not computable is that the rank function is discontinuous, so checking whether a real number is exactly zero is not computable.
Since the rank cannot be calculated from Cauchy names of the matrix entries, neither the basis for the image nor that of the kernel can be calculated in general. In the following we give standard definitions and
facts from TTE (for a detailed introduction see \cite{Weihrauch2000}).

\begin{definition}
A \textbf{representation} of a set $X$ is a partial surjection
\[
  \delta_X : \subseteq \Baire \to X .
\]
If $\delta_X(p)=x$, then $p$ is a \textbf{$\delta_X$-name} of $x$.
The pair $(X,\delta_X)$ is a represented space.
\end{definition}

\begin{definition}
For represented spaces $(X,\delta_X)$ and $(Y,\delta_Y)$, a map
$f: \subseteq X \to Y$ is \textbf{computable} if some Type-2 machine
computes a realizer $F: \subseteq \Baire \to \Baire$ such that
\[
  \delta_Y(F(p)) = f(\delta_X(p))
\]
for all $p\in \dom(f\circ\delta_X)$.
\end{definition}

Linear-algebra tasks can have many valid answers,
so we use multi-valued maps
\[
  f: \subseteq X \Mult Y .
\]
A Type-2 machine computes $f$ if, from every name of $x\in\dom(f)$, it
outputs a name of some $y\in f(x)$.

\medskip
\begin{definition}[Cauchy representation]
A $\delta_\R$-name (Cauchy name) of $x\in\R$ is a sequence of rationals $(q_m)_{m\in\N}$
with
\[
  |q_m-x|\le 2^{-m}\qquad \forall m\in\N.
\]
Matrices in $M_n(\R)$ are named by \emph{interleaving}
the names of their individual entries, i.e., by the representation
$\delta_\R^{n \times n}$.
Finite data, such as $n$ or a
Cayley table, are given exactly as finite strings.
\end{definition}

\begin{lemma}
  Let $x$ be a real number given by its Cauchy name $(q_m)_{m\in\N}$. Suppose $x \in \Z$. Then there is
  a uniform algorithm that calculates $x$ exactly.
  \label{lem:int-rec}
\end{lemma}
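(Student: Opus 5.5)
The plan is to read off a single sufficiently accurate rational approximation and round it to the nearest integer. Concretely, the algorithm queries the Cauchy name at index $m=2$, obtaining a rational $q_2$ with $|q_2 - x| \le 2^{-2} = 1/4$. Since $q_2$ is a rational given exactly as a pair of integers, the machine can compute the unique integer $z$ with $|q_2 - z| < 1/2$. If $q_2$ happens to be a half-integer, it picks, say, the smaller of the two candidates, although we argue below that this case cannot arise. The algorithm then outputs $z$. It reads only finitely many symbols of the input and does not depend on $x$ beyond its name, so it is uniform.

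For correctness, we use the hypothesis $x\in\Z$. By the triangle inequality, every integer $z' \neq x$ satisfies $|q_2 - z'| \ge |x - z'| - |q_2 - x| \ge 1 - 1/4 = 3/4$. On the other hand, $|q_2 - x| \le 1/4 < 1/2$. Hence $x$ is the unique integer within distance $1/2$ of $q_2$. In particular, $q_2$ is never a half-integer, and the rounding step returns exactly $x$. The value $z$ is found by integer arithmetic on the numerator and denominator of $q_2$, i.e.\ $z = \lfloor q_2 + 1/2 \rfloor$, so the step is an ordinary finite computation.

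There is no real obstacle here. The only point needing care is that the procedure is correct only on the promise $x \in \Z$. On names of non-integers it still outputs some integer, which is harmless because the lemma concerns the partial map defined on integer inputs. Any fixed index $m \ge 2$ (indeed any $m$ with $2^{-m} < 1/2$) works equally well. For later use with traces of projectors, one would apply this lemma to a computable Cauchy name of the trace. That name is obtained from the names of the matrix entries via the standard computability of finite sums and products of reals.
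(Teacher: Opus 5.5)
Your proposal is correct and follows essentially the same route as the paper: both read the single approximation $q_2$ with $|q_2-x|\le 1/4$ and return the integer close to it, justified by the triangle inequality and the integrality of $x$. The only difference is cosmetic: the paper searches $0,-1,1,-2,2,\dots$ for an integer within distance $1/3$ of $q_2$, whereas you compute $\lfloor q_2+1/2\rfloor$ directly.
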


\begin{proof}
  Let $k = 2$, so that $|x - q_k| \le 1/4 < 1/3$. There is nothing
  specific in the choice of the bound $1/3$. In fact, any error bound $< 1/2$ would work.
  Now run through $m \in \Z$ in the order $0, -1, 1, -2, 2, \dots$ until
  we find $m$ such that $|m - q_k| < 1/3$. Such an $m$
  exists, because $x \in \Z$ and $|x -  q_k| < 1/3$, so this search terminates.
  The integer $m$ found is equal to $x$, since by the triangle inequality
  we have $|x - m| \leq |x - q_k| + |q_k - m| < 2/3$,
  and since $x, m \in \Z$ this gives $x = m$. Hence, output $m$.
\end{proof}

\begin{lemma}
  Let $P$ be an $n \times n$ matrix such that $P^2=P$.
  Then $\rank(P)$ can be calculated exactly from the Cauchy name of $P$.
  \label{lem:idem-rank}
\end{lemma}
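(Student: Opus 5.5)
The plan is to reduce the rank computation to recovering an integer, and then apply Lemma~\ref{lem:int-rec}. The key algebraic fact is that for an idempotent $P$ we have $\rank(P)=\tr(P)$. Since $P^2=P$, the minimal polynomial of $P$ divides $t^2-t=t(t-1)$, which has distinct roots. Hence $P$ is diagonalizable over $\R$ with eigenvalues in $\{0,1\}$. Concretely, $\R^n=\im P\oplus\ker P$, because $x=Px+(I-P)x$, with $P$ acting as the identity on $\im P$ and as zero on $\ker P$. In a basis adapted to this decomposition, $P$ is similar to $\diag(I_r,0)$ with $r=\rank(P)$. Therefore $\tr(P)=r$, since the trace is similarity invariant. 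In particular $\tr(P)\in\Z$.

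The second step is the computability of the trace from a Cauchy name of $P$. The name of $P$ is the interleaving of Cauchy names of its entries, so for each $m$ we can read off rationals $q^{(ii)}_{m'}$ with $|q^{(ii)}_{m'}-P_{ii}|\le 2^{-m'}$. Choose $m'=m+\lceil\log_2 n\rceil$ and set $s_m=\sum_{i=1}^n q^{(ii)}_{m'}$. Then
\[
|s_m-\tr(P)|\le n2^{-m'}\le 2^{-m},
\]
so $(s_m)_m$ is a Cauchy name of $\tr(P)$, produced uniformly in $n$ and the name of $P$. This is just the standard computability of finite sums of reals.

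Finally, we apply Lemma~\ref{lem:int-rec} to the Cauchy name $(s_m)_m$ of the integer $\tr(P)$. This outputs $\tr(P)$ exactly, and by the first step that value is $\rank(P)$. The whole procedure is uniform: it uses only $n$ and the name of $P$, and it needs only finitely many digits of that name.

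The only genuinely non-routine point is the identity $\rank(P)=\tr(P)$. It is what bypasses the discontinuity of rank discussed above. Rank is discontinuous on $M_n(\R)$ in general, but restricted to idempotents it coincides with a continuous, integer-valued function. I expect no real obstacle beyond stating the eigenvalue argument cleanly. The argument needs the hypothesis $P^2=P$ to hold exactly, since the algorithm cannot verify it; it is a promise on the input, as in the statement.
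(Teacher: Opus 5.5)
Your proof is correct and follows essentially the same route as the paper: $\R^n=\im P\oplus\ker P$ gives $\rank(P)=\tr(P)$, the trace is computed from the Cauchy name, and integer recovery (Lemma~\ref{lem:int-rec}) then yields the exact value. Your explicit precision bookkeeping with $m'=m+\lceil\log_2 n\rceil$ simply spells out the step the paper summarizes as ``the trace is computable.''
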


\begin{proof}
  For any idempotent $P$, we have $\R^n = \im P \oplus \ker P$.
  Hence, $P$ acts as the identity on $\im P$ and as $0$ on $\ker P$.
  Thus $P$ is diagonalizable with eigenvalues in the set $\{0,1\}$, and
  $\rank(P) = \tr(P)$.
  On the other hand, the trace $\tr(P)$ is computable from
  the Cauchy name of $P$. Further, since we know that $\tr(P) \in \{0,\dots,n\}$,
  we can use integer recovery (Lemma \ref{lem:int-rec}) to obtain $\tr P$, and hence $\rank(P)$, exactly.
  Note that here we do not assume that $P$ is symmetric or orthogonal.
\end{proof}

\begin{lemma}[see {\cite[Theorem~11]{ZieglerBrattka2004}}]
  Let $A$ be an $n \times n$ matrix given by Cauchy names.
  Let $r \in \N$ be the rank of $A$, supplied as an exact integer input.
  Then we can uniformly compute a basis for $\im A$ from this input.
  \label{lem:im-ker-computable-from-given-rank}
\end{lemma}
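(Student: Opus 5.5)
The idea is to use the known rank $r$ to locate, by a finite semi-decidable search, $r$ columns of $A$ that are linearly independent. These columns then form a basis of $\im A$. The structural fact behind this is that $\im A$ is spanned by the columns $a_1,\dots,a_n$ of $A$, and $\dim \im A = r$. Hence some subset $S\subseteq\{1,\dots,n\}$ with $|S|=r$ gives linearly independent columns $\{a_j : j\in S\}$, and any such family spans $\im A$. The degenerate case $r=0$ is handled separately: the machine outputs the empty basis, which is correct since then $A=0$.

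For $r\ge 1$, linear independence of $\{a_j:j\in S\}$ is equivalent to some $r\times r$ minor of the $n\times r$ submatrix $A_S$ being nonzero. In other words, it is equivalent to
\[
  \exists\, T\subseteq\{1,\dots,n\},\ |T|=r:\quad \det A_{T,S}\neq 0 .
\]
Each determinant is a polynomial in the entries, so it is computable from the Cauchy name of $A$, and the predicate $\det A_{T,S}\neq 0$ is semidecidable. The machine therefore dovetails over all pairs $(S,T)$ with $|S|=|T|=r$ and over precisions $m=0,1,2,\dots$. At each stage it computes a rational approximation $q$ of $\det A_{T,S}$ with error at most $2^{-m}$, and it halts on the first pair with $|q|>2^{-m}$, which certifies $\det A_{T,S}\ne 0$. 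Termination follows because a pair with nonzero minor exists, and its approximation eventually exceeds the error bound. Once $S$ is found, the output is the Cauchy names of the columns $a_j$, $j\in S$, copied directly from the input name.

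Correctness follows from three observations. The chosen columns are independent because they have a nonzero $r\times r$ minor. There are $r=\rank A$ of them. They lie in $\im A$. Together these give a basis. Uniformity holds because the procedure depends only on $n$, $r$ and the input name.

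The main subtlety is that different names of the same $A$ may lead to different $S$. This is acceptable, since the problem is the multi-valued map $A\Mult$ (bases of $\im A$). The key point is that knowing $r$ turns the unsolvable search for a maximal independent set into a search that halts at size exactly $r$. Without $r$, we could never certify that a smaller independent set is maximal, because that would require deciding that all larger minors vanish.
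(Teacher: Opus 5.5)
Your proposal is correct and follows the paper's proof essentially step for step. Both output the empty basis when $r=0$; otherwise both dovetail over all $r\times r$ minors, semidecide $\det\neq 0$, and return the columns indexed by the first minor certified nonzero, which form a basis because $\rank A = r$. The only difference is that you carry out the semidecision explicitly by dovetailing over precisions with the test $|q|>2^{-m}$, whereas the paper cites Ziegler and Brattka's lemma that nonvanishing of a real number is semidecidable.
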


\begin{proof}
  Assume $r = \rank A$ is supplied as input. If $r=0$, then $\im A=\{0\}$ and the empty list is a basis, so suppose $r>0$.
  The following algorithm terminates and calculates a basis for $\im A$. We proceed by enumerating
  all $r \times r$ minors of $A$. Equivalently, we search for $r$ linearly independent columns of $A$.
  For each chosen $r\times r$ minor $M$ we can semidecide whether $\det M \not =0$, since we can
  semidecide whether a given real number is nonzero (see \cite[Lemma~10(a)]{ZieglerBrattka2004}). We test the minors in a dovetailing
  fashion. Since $\rank A = r$, some $r \times r$ minor is nonzero, so this procedure eventually halts on a minor $M$ with $\det M \not = 0$.
  The $r$ columns of $A$ indexed by the columns of $M$ are then linearly independent, and since $\rank A = r$, they form a basis of $\im A$.
\end{proof}

\subsection{Some known computable linear algebra results}
Already for a single symmetric matrix, finding an orthogonal matrix that diagonalizes it is not computable.
\begin{proposition}[M.~Ziegler and V.~Brattka {\cite[Example~18 and Section~3.5]{ZieglerBrattka2004}}]
 Let $n \geq 2$. The multi-valued eigenbasis problem
  \[
    \Sym_n \Mult O(n),
     \qquad A \mapsto \{U \in O(n): U^{T} A U\text{ is diagonal}\}
  \]
  is \textbf{not} Type-2 computable.
  \label{prop:eigenbasis}
\end{proposition}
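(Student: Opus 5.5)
The plan is the standard continuity argument from TTE: every Type-2 computable multi-valued map has a continuous realizer, so it suffices to show that no realizer of the eigenbasis problem can be continuous at a suitable input. Since the statement is attributed to Ziegler and Brattka, one could simply cite \cite[Example~18]{ZieglerBrattka2004}. For a self-contained argument I would first reduce to $n=2$. For $n>2$, embed $2\times 2$ symmetric matrices as $\diag(A,\,3I_{n-2})$ with $\|A\|$ small, and read off the restriction of $U$ to the first two coordinates. Only eigenvectors for eigenvalues near those of $A$ can live there, so a realizer for size $n$ would yield one for size $2$. Alternatively, I would run the same discontinuity argument directly in dimension $n$ with the perturbations acting on the first two coordinates.

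For $n=2$, I would take the two families
\[
  A_\varepsilon=\begin{pmatrix}\varepsilon&0\\0&-\varepsilon\end{pmatrix},\qquad
  B_\varepsilon=\begin{pmatrix}0&\varepsilon\\ \varepsilon&0\end{pmatrix},\qquad \varepsilon>0.
\]
Both converge to the zero matrix. For $\varepsilon\neq 0$, the eigenvectors of $A_\varepsilon$ are $\pm e_1,\pm e_2$, and those of $B_\varepsilon$ are $\pm(e_1\pm e_2)/\sqrt2$. Hence every $U\in O(2)$ diagonalizing $A_\varepsilon$ has columns in $\{\pm e_1,\pm e_2\}$, and every $U$ diagonalizing $B_\varepsilon$ has columns in $\{\pm(e_1\pm e_2)/\sqrt2\}$. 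These two finite sets are at distance at least $c=\sqrt{2-\sqrt2}>0$ from each other.

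Now suppose a machine $M$ computes the problem. I would feed it a name of the zero matrix whose $m$-th approximations are all the rational zero matrix, and wait until it has output an approximation of the answer $U_0$ to precision $2^{-k}$ with $2^{-k}<c/4$. By the finiteness of Type-2 computation, this output depends only on a finite prefix of length $N$ of the input. For $\varepsilon=2^{-N-1}$, that prefix is also a valid prefix of names of both $A_\varepsilon$ and $B_\varepsilon$, since each entry is within $2^{-m}$ of $0$ for all $m\le N$. Continue the name so that it names $A_\varepsilon$ in one run and $B_\varepsilon$ in the other. The machine then outputs matrices within $2^{-k}$ of the same rational approximation, hence within $2\cdot 2^{-k}<c/2$ of each other. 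One of them must diagonalize $A_\varepsilon$ and the other $B_\varepsilon$, and they are at distance at least $c$ from each other. This is a contradiction.

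The main obstacle is making the reduction from $n>2$ to $n=2$ precise without gaps. In particular, one has to argue that the eigenvectors of $\diag(A_\varepsilon,\,3I_{n-2})$ for the small eigenvalues are exactly $(v,0)$ with $v$ an eigenvector of $A_\varepsilon$. This follows because $A_\varepsilon$ has two distinct eigenvalues both different from $3$. Consequently the relevant columns of any output $U$ are at distance at least $c$ between the two cases. I would therefore run the perturbation argument directly in dimension $n$ with these block matrices, rather than reducing formally.
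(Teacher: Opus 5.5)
Your proof is correct and follows the same route as the argument the paper cites from Ziegler and Brattka. It shows there is no continuous realizer at the degenerate input $0$. Your two explicit families $A_\varepsilon$, $B_\varepsilon$ simply stand in for Rellich's single family $B(\epsilon)$, whose rotating eigenvectors pass through both of your configurations. The case $n>2$ is handled, as in the paper, by padding with a scalar block whose eigenvalue is separated from $\pm\varepsilon$; the paper uses $I_{n-2}$ and you use $3I_{n-2}$.

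The only detail to tidy is the output precision in dimension $n$. Choose the index $j$ of a column equal to $\pm e_1$ (padded by zeros) in the output for $A_\varepsilon$. The $(1,j)$ entry of the output for $B_\varepsilon$ lies in $\{0,\pm 1/\sqrt{2}\}$, so $2\cdot 2^{-k}<1-1/\sqrt{2}$ gives the contradiction uniformly in $n$. Your bound $2^{-k}<c/4$ does not suffice for large $n$ once the eigenvalue-$3$ columns are involved.
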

The argument in \cite[Example~18]{ZieglerBrattka2004} uses Rellich's family $B(\epsilon)$ of symmetric $2\times 2$ matrices, whose eigenvectors depend discontinuously on $\epsilon$ at $\epsilon = 0$. For $n > 2$, the same argument applies to the matrices obtained by embedding $B(\epsilon)$ in the upper-left $2\times 2$ block of $I_n$.

For a symmetric matrix $A$, let $\R[A]$ denote the algebra of all real polynomials in $A$. Since $A$ is orthogonally diagonalizable, $\R[A] \cong \R^{|\sigma(A)|}$ is semisimple.

\begin{corollary}
  Let $n \geq 2$. The multi-valued map which sends $A \in \Sym_n$ to the set of orthogonal universal block diagonalizers for $\R[A]$ with block sizes $(1,\dots,1)$ is \textbf{not} Type-2 computable.
  \label{cor:RA}
\end{corollary}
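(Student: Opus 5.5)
We prove Corollary~\ref{cor:RA} by reducing the eigenbasis problem of Proposition~\ref{prop:eigenbasis} to it: if the map in Corollary~\ref{cor:RA} were computable, then so would be the multi-valued eigenbasis map, which is impossible. The whole argument rests on one elementary observation. Since $A \in \R[A]$, any orthogonal universal block diagonalizer for $\R[A]$ with block sizes $(1,\dots,1)$ is in particular an orthogonal matrix $U$ such that $U^TAU$ is diagonal.

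Conversely, the target set is nonempty for every symmetric $A$, so the corollary's map is total on $\Sym_n$. Indeed, by the spectral theorem there is $U\in O(n)$ with $U^TAU=D$ diagonal. Then for every polynomial $p$ we have $U^Tp(A)U=p(D)$, which is again diagonal. Hence $U$ is an orthogonal universal block diagonalizer for $\R[A]$ with block sizes $(1,\dots,1)$.

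The two multi-valued maps therefore coincide exactly as set-valued functions on $\Sym_n$. For every $A\in\Sym_n$,
\[
\{U\in O(n): U^TAU \text{ diagonal}\} \;=\; \{U\in O(n): U^TBU \text{ diagonal for all } B\in\R[A]\}.
\]
The inclusion "$\supseteq$" holds because $A\in\R[A]$. The inclusion "$\subseteq$" is the polynomial argument from the previous paragraph.

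Now suppose a Type-2 machine realized the corollary's map. Given a Cauchy name of $A$, it would output a name of some $U$ in the right-hand set, which is also in the left-hand set. The same machine would then realize the eigenbasis map, contradicting Proposition~\ref{prop:eigenbasis}.

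We must also check that the input representations agree. In both maps $A$ is given by $\delta_\R^{n\times n}$ restricted to $\Sym_n$, and the output $U$ is named in the same way. So no translation between representations is needed.

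There is no real obstacle in this argument. The only point requiring care is the "$\subseteq$" inclusion, namely that diagonalizing the generator diagonalizes every polynomial in it. This is immediate from $U^Tp(A)U=p(U^TAU)$, which holds because $U$ is orthogonal.
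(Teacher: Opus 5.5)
Your proposal is correct and follows the paper's proof: the set of orthogonal universal block diagonalizers for $\R[A]$ with block sizes $(1,\dots,1)$ coincides with the eigenbasis set, by $A\in\R[A]$ in one direction and $U^Tp(A)U=p(U^TAU)$ in the other. The non-computability then transfers from Proposition~\ref{prop:eigenbasis}. Your remarks on totality and on the matching $\delta_\R^{n\times n}$ representations are harmless extra care that the paper leaves implicit.
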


\begin{proof}
A matrix $U \in O(n)$ is an orthogonal universal block diagonalizer for $\R[A]$ with block sizes $(1,\dots,1)$ if and only if $U^TAU$ is diagonal: necessity holds since $A \in \R[A]$, and sufficiency holds since $U^T f(A) U = f(U^TAU)$ is diagonal for every polynomial $f$. Hence the map coincides with the eigenbasis problem, which is not computable by Proposition~\ref{prop:eigenbasis}.
\end{proof}

Thus, for the semisimple algebras $\R[A]$, universal block diagonalizers are not computable from the generator $A$ alone, even though the block sizes $(1,\dots,1)$ are known in advance. However, given $|\sigma(A)|$, the number of distinct eigenvalues of $A$, as \emph{discrete advice} on the input, the problem becomes computable:
\begin{proposition}[M.~Ziegler and V.~Brattka {\cite[Theorem~19]{ZieglerBrattka2004}}]
 The advised multi-valued problem
  \[
     (A, |\sigma(A)|) \mapsto \{U \in O(n): U^{T} A U\text{ is diagonal}\}
  \]
  is Type-2 computable.
\end{proposition}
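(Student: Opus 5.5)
The plan is to reduce the advised eigenbasis problem to the advised rank problem via spectral projectors, with the advice $k=|\sigma(A)|$ used only to isolate the distinct eigenvalues $\mu_1,\dots,\mu_k$. \textbf{Step 1.} The characteristic polynomial $\chi_A$ has coefficients computable from the Cauchy name of $A$. Its roots, taken as an unordered $n$-tuple with multiplicity, are computable by the standard continuity of roots of monic polynomials. Since $A$ is symmetric, all roots are real. So for every precision $m$ we obtain rationals $r_1^{(m)},\dots,r_n^{(m)}$ and a matching of the true eigenvalues $\lambda_1,\dots,\lambda_n$ (listed with multiplicity) with $|r_i^{(m)}-\lambda_i|\le 2^{-m}$.

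\textbf{Step 2 (isolating the distinct eigenvalues; the main obstacle).} At precision $m$, form the graph on $\{1,\dots,n\}$ joining $i,j$ whenever $|r_i^{(m)}-r_j^{(m)}|\le 2\cdot 2^{-m}$, and take its connected components (clusters). Two indices carrying the same eigenvalue are always adjacent, so there are at most $k$ clusters. Hence the number of clusters equals $k$ exactly when each cluster consists precisely of the indices of one eigenvalue. Once $2^{-m}$ is well below a quarter of the minimal gap between distinct eigenvalues, there are exactly $k$ clusters. So the search over $m=0,1,2,\dots$ for a precision giving $k$ clusters terminates, and its answer is certified correct. This yields the multiplicities $d_j$ (cluster sizes), and the algorithm fixes the ordering of clusters by left-to-right position, which is well defined once clusters are correct. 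A Cauchy name of each $\mu_j$ is then produced by refining: at every higher precision the $k$ clusters remain correct and can be matched to the previous ones by nearest position, because the previously certified separation $>2\cdot 2^{-m}$ is preserved. Any member of cluster $j$ at precision $m'$ then approximates $\mu_j$ within $2^{-m'}$. This is the delicate step; it is exactly where the advice $k$ replaces the undecidable equality tests.

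\textbf{Step 3.} With distinct computable reals $\mu_1,\dots,\mu_k$ in hand, compute the spectral projectors by Lagrange interpolation,
\[
P_j=\prod_{i\neq j}\frac{A-\mu_i I}{\mu_j-\mu_i}.
\]
The denominators are nonzero and computable, and these $P_j$ are the orthogonal projectors onto the eigenspaces. They form a resolution of the unit matrix. Their ranks are either the $d_j$ from Step 2 or, alternatively, obtained from Lemma~\ref{lem:idem-rank}. By Lemma~\ref{lem:im-ker-computable-from-given-rank} we then compute a basis of each $\im P_j$. Gram--Schmidt within each $\im P_j$ is computable here, since the chosen columns are certified linearly independent and so all norms divided by are nonzero. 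It gives orthonormal bases, and distinct eigenspaces of a symmetric matrix are mutually orthogonal. Concatenating these bases gives $U\in O(n)$. By Proposition~\ref{prop:resolution}, applied with all blocks acting as scalars $\mu_j$ (since $A$ restricted to $\im P_j$ equals $\mu_j I$), the matrix $U^TAU=\diag(\mu_1 I_{d_1},\dots,\mu_k I_{d_k})$ is diagonal. All steps are uniform in the name of $A$ and in $k$.
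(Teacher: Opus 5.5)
The paper gives no proof of this statement; it cites \cite[Theorem~19]{ZieglerBrattka2004}. So your argument has to stand on its own. The overall plan is sound, and Step~3 is correct as written: the Lagrange products are the orthogonal eigenprojectors, their ranks are the $d_j$ (or $\tr P_j$), and Lemma~\ref{lem:im-ker-computable-from-given-rank} plus Gram--Schmidt finishes.

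\textbf{The gap.} The refinement claim in Step~2 is false. Certification at precision $m$ does not imply that the $k$ clusters ``remain correct at every higher precision.'' Certification only says that cross-cluster \emph{approximations} are more than $2\cdot 2^{-m}$ apart. That bounds the eigenvalue gap below by $|r_i^{(m)}-r_j^{(m)}|-2\cdot 2^{-m}$, which is positive but can be arbitrarily small. Your threshold rule at precision $m'$ is only guaranteed correct when the gap exceeds $4\cdot 2^{-m'}$. So ``the previously certified separation is preserved'' is a statement about the approximations, not about the eigenvalues.

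Here is a concrete failure. Let $A=\diag(0,\delta)$ with $0<\delta<1$, so $k=2$.
\begin{itemize}
\item At $m=0$, the values $-1$ and $1+\delta$ are legitimate approximations. They are more than $2$ apart, so your search halts at $m=0$.
\item At $m'=1$, the exact values $0$ and $\delta$ are legitimate approximations. They lie within $1=2\cdot 2^{-1}$ of each other, so there is a single cluster and there is no ``cluster $2$'' to refine from.
\end{itemize}

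\textbf{The repair} is short; either of the following works.
\begin{itemize}
\item \emph{Re-search at each output precision.} To produce the $m'$-th term of the name of $\mu_j$, rerun the search over precisions $m''\ge m'$ until exactly $k$ clusters appear; this terminates for the same reason as before. Output any member of the $j$-th cluster from the left. This labeling is consistent across $m'$, because at any precision $m''$ with exactly $k$ clusters, the left-to-right order of clusters is the order $\mu_1<\dots<\mu_k$. Indeed, suppose $x$ approximates $\mu_a$ and $y$ approximates $\mu_b$, with $\mu_a<\mu_b$, the two lie in different clusters, and $y\le x$. Then $x>y+2\cdot 2^{-m''}\ge \mu_b+2^{-m''}>\mu_a+2^{-m''}\ge x$, a contradiction.
\item \emph{Sort first (cleaner).} Since all roots are real, sorting the rational approximations preserves the $2^{-m}$ matching bound. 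This gives the sorted eigenvalues $\lambda_{(1)}\le\dots\le\lambda_{(n)}$ as computable reals. With the advice $k$, semidecide the $n-1$ inequalities $\lambda_{(i)}<\lambda_{(i+1)}$ in parallel until $k-1$ are confirmed; the rest are equalities. Each $\mu_j$ is then one of the $\lambda_{(i)}$, so no re-matching is needed.
\end{itemize}

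In Step~1, cite the computability of the unordered zero tuple of a monic polynomial, not mere continuity of roots. With either repair, your projector route is a nice fit with the paper's own mechanism in Section~\ref{sec:comp-univ-iso-red}: a resolution of the unit matrix whose ranks are integer traces, with the advice $k$ playing the role the group data plays there.
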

These results illustrate the general pattern that motivates this paper: a problem which is not computable from Cauchy names alone becomes computable once suitable discrete advice is supplied. In our setting, the corresponding discrete information consists of the block sizes $d_i$. Unlike externally supplied discrete advice, these integers are computed uniformly from the finite-group representation data (Section~\ref{sec:comp-univ-iso-red}).

\section{Finite-group representations and real character data}
\label{sec:fin-group-repr}
In this section we give an overview of known results from the representation theory of finite groups.
These results are essential for constructing the resolution of the unit matrix that will circumvent
the non-computability of the rank. For a more comprehensive introduction the reader can consult the
classic texts (see \cite{Isaacs1976}, \cite{Serre1977}).

Let $G$ be a finite group. Let $V$ be a finite-dimensional real vector space.
A \emph{representation} of $G$ in $V$ is
a homomorphism $G \to \GL(V)$.\footnote{Here the term \emph{representation} stems from the theory of finite groups and is unrelated to representations in the sense of TTE (Section~\ref{sec:type-2}), which we denote by $\delta$.}
Since $V$ is finite-dimensional, after choosing a basis we identify $V$ with $\R^n$ and write $\rho: G \to \GL_n(\R)$ instead. A linear subspace $W \subseteq V$ is an \emph{invariant} subspace for $\rho$ if $\rho(g) W \subseteq W$ for all $g \in G$.

We say that $\rho$ is \emph{irreducible} if the only invariant subspaces for $\rho$ are $\{0\}$ and $V$.
Let $\rho_1,\dots,\rho_k$ denote a complete list of pairwise non-equivalent real irreducible representations of $G$. For each $i$, choose a representation space $W_i$ so that
\[
  \rho_i:G\to \GL(W_i),
\]
and put $n_i=\dim_\R W_i$.

By $\chi_\rho: G \to \R$, $\chi_\rho(g) = \tr(\rho(g))$ for all $g \in G$, we denote the \emph{character} of $\rho$.
By $\chi_i$ we denote the character of the irreducible representation $\rho_i$.
Since the eigenvalues of $\rho(g)$ are roots of unity and $\chi_\rho$ is real-valued, we have $\chi_\rho(g^{-1}) = \overline{\chi_\rho(g)} = \chi_\rho(g)$ for all $g \in G$.

For every representation $\rho$ we have a direct sum decomposition
\[
   \rho \cong \bigoplus_{i=1}^k\bigoplus_{j=1}^{m_i}\rho_i,
\]
where $m_i\in\N$ is the multiplicity of $\rho_i$ in $\rho$. For the characters this gives
\[
   \chi_\rho = m_1 \chi_1 + \cdots + m_k \chi_k .
\]
Let
\[
  D_i=\End_G(\rho_i)
  =\{T\in\End_\R(W_i):T\rho_i(g)=\rho_i(g)T\text{ for all }g\in G\}.
\]
Thus $D_i$ is the commutant of the irreducible representation $\rho_i$. By the real Schur lemma, $D_i$ is isomorphic to $\R$, $\C$ or $\HH$, and
$\kappa_i=\dim_\R D_i\in \{1,2,4\}$.

Define
\[
  \operatorname{Hom}_G(\rho_i,\rho)
  =\{T:W_i\to \R^n\text{ linear}:T\rho_i(g)=\rho(g)T\text{ for all }g\in G\}.
\]
The \emph{$i$-th isotypic component} of $\rho$ is the subspace
\[
  V_i=\operatorname{span}\{T(w):T\in\operatorname{Hom}_G(\rho_i,\rho),\ w\in W_i\}.
\]
Equivalently, $V_i$ is the sum of all $\rho$-invariant subspaces on which the restricted representation is isomorphic to $\rho_i$. If $m_i$ is the multiplicity of $\rho_i$ in $\rho$, then
\[
  \rho|_{V_i}\cong \bigoplus_{j=1}^{m_i}\rho_i.
\]
After omitting the indices with $m_i=0$, the decomposition
\[
  \R^n=\bigoplus_i V_i
\]
is called the \emph{real isotypic decomposition} of $\rho$.

The set
\begin{equation}
\mathcal{C}(G, \rho) = \End_G (\rho) = \{ A \in M_n(\mathbb{R}) \mid A \rho(g) = \rho(g) A, \mbox{ for all } g \in G \}
\label{eq:commutant}
\end{equation}
is called the \emph{commutant} of $G$ with respect to $\rho$.

In the following, we calculate explicitly the multiplicities $m_i$ for a given representation $\rho$
by using the orthogonality relations for characters. Let
\[
  \langle \varphi, \psi \rangle = \frac{1}{|G|}\sum_{g \in G} \varphi(g)\psi(g^{-1}).
\]
For the real irreducible characters, the orthogonality relations read
\[
  \langle \chi_i,\chi_j\rangle
  =\kappa_i\delta_{i,j},
\]
where $\delta_{i,j}$ is the Kronecker delta. More generally, if
\[
  \sigma:G\to\GL(V),
  \qquad
  \tau:G\to\GL(W)
\]
are finite-dimensional real representations, then
\[
  \langle\chi_\sigma,\chi_\tau\rangle
  =
  \dim_\R\operatorname{Hom}_G(\tau,\sigma).
\]
Hence, for irreducible real representations $\rho_i$ and $\rho_j$,
\[
  \langle\chi_i,\chi_j\rangle
  =
  \dim_\R\operatorname{Hom}_G(\rho_j,\rho_i)
  =
  \kappa_i\delta_{i,j},
\]
by the real Schur lemma, where $\kappa_i=\dim_\R\End_G(\rho_i)$ (see \cite{Serre1977}).
Applying $\langle \cdot, \chi_i \rangle$
to $\chi_\rho = \sum_j m_j\chi_j$ for each $i \in \{1,\dots,k\}$, we obtain
$\langle \chi_\rho, \chi_i\rangle = m_i\kappa_i$, i.e.,
\[
  m_i= \frac{1}{\kappa_i|G|}\sum_{g\in G} \chi_\rho(g)\chi_i(g^{-1}).
\]
Since $m_i$ is the multiplicity counting the number of copies of $\rho_i$ in $\rho$,
$m_i$ is a non-negative integer. In the real case, we require $\kappa_i$ as a
corrective factor due to the different Schur types of $\rho_i$. Note also that $\kappa_i = \langle \chi_i, \chi_i \rangle$ is computable exactly from the character $\chi_i$.

Here we show that the multiplicities $m_i$ can be recovered exactly by a Type-2 machine.
Let $S$ be a finite set of generators for $G$. Given $\rho(g)$ for $g \in S$, we can calculate
$\rho(g)$ for every $g \in G$ by forming products of the $\rho(g)$ for $g \in S$.
Suppose now that we have $\rho(g)$ for each $g\in G$.
We proceed by calculating the character using the defining trace formula
$\chi_\rho(g)=\tr(\rho(g))$ for every $g \in G$ from the Cauchy
names for $\rho(g)$.
By the formula
\begin{equation}
  m_i= \frac{1}{\kappa_i|G|}\sum_{g\in G} \chi_\rho(g)\chi_i(g^{-1})
  \label{eq:mult-formula}
\end{equation}
we can calculate $m_i$ by using Cauchy names for $\chi_\rho$ and $\chi_i$.
Since we know that $m_i \in \N$ for each $i \in \{1,\dots,k\}$, we can use
integer recovery (Lemma \ref{lem:int-rec}) to output $m_i$ exactly.

\section{Computable universal isotypic block diagonalization}
\label{sec:comp-univ-iso-red}
In this section we define precisely the input for our Type-2 machine calculating the universal block diagonalizer.
Let $\mathcal X$ be the represented space of tuples consisting of a Cayley table for a finite group $G$, a generating set $S\subseteq G$, a dimension $n$, and a real matrix $A_s\in M_n(\R)$ for each $s\in S$. The finite components are represented by any fixed computable coding of finite strings. Each matrix $A_s$ is represented by $\delta_\R^{n\times n}$, and the finitely many components are combined by the usual computable finite-product coding.

We restrict $\mathcal X$ to the subspace $\operatorname{FinRep}$ consisting of those tuples for which the matrices $A_s$, $s\in S$, extend to a representation
\[
  \rho:G\to \GL_n(\R).
\]
The space $\operatorname{FinRep}$ carries the subspace representation inherited from $\mathcal X$. Membership in $\operatorname{FinRep}$ need not be decidable from the Cauchy names, and no such decision procedure is required for computability on this represented domain.

For any input from $\operatorname{FinRep}$ and any group element $g$, the matrix $\rho(g)$ can be calculated via finite products by performing a breadth-first search on the Cayley graph of $G$ with respect to $S$. We do not consider approximate representations here (see, e.g., \cite{BabaiFriedl1991, MooreRussell2015}).

The output is represented by a finite code for the number of nonzero blocks and their dimensions, together with a $\delta_\R^{n\times n}$-name of the matrix $U$. These finite and infinite components are combined by a fixed computable product coding.

We can now state the main theorem about computability of universal block diagonalization.
\begin{theorem}
  There exists a Type-2 machine which, given the input described above, outputs an invertible matrix $U$ and integers $d_1,\dots,d_r \geq 1$ with the following \emph{universal property}:
  the conjugated matrix $U^{-1}AU$ is block diagonal with diagonal blocks of sizes $(d_1,\dots,d_r)$ for every $A$ from the commutant $\End_G(\rho)$.
  Here $d_1 + \dots + d_r = n$, where $n$ is the dimension of the input representation,
  and each block corresponds to a nonzero real isotypic component of $\rho$.
  Hence, the same $U$ works for every $A$ in the commutant, and $U$ is uniformly computable from the input.
  In case $\rho$ is an orthogonal representation, $U$ can be chosen orthogonal (Theorem~\ref{thm:ubd-orth}).
 \label{thm:ubd-main}
\end{theorem}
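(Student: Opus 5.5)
The plan is to build, uniformly from the input, a resolution of the unit matrix $\{P_1,\dots,P_k\}$ whose images are the real isotypic components $V_i$, and whose members all commute with the commutant $\End_G(\rho)$. Then I apply Proposition~\ref{prop:resolution}. The discrete data (the $d_i$) come from Lemma~\ref{lem:idem-rank}, and the basis vectors from Lemma~\ref{lem:im-ker-computable-from-given-rank}.

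\emph{Step 1: exact group data.} From the Cayley table I compute exactly the complex character table of $G$, for instance by the Burnside--Dixon algorithm \cite{Dixon1967}. Its entries are sums of roots of unity and are represented exactly, for example in a cyclotomic field $\mathbb{Q}(\zeta_{|G|})$. Using the Frobenius--Schur indicators $\frac{1}{|G|}\sum_g \psi(g^2)\in\{1,0,-1\}$, I pass from each complex irreducible $\psi$ to the corresponding real irreducible character:
\begin{itemize}
\item $\chi=\psi$ when the indicator is $1$;
\item $\chi=\psi+\bar\psi$ when it is $0$;
\item $\chi=2\psi$ when it is $-1$.
\end{itemize}
This yields all real irreducible characters $\chi_1,\dots,\chi_k$, with $n_i=\chi_i(e)$ and $\kappa_i=\langle\chi_i,\chi_i\rangle$ known exactly. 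Every value $\chi_i(g)$ is a computable real, so Cauchy names for all of them are available uniformly. I expect this step to be the main obstacle in terms of rigor. All of it is finite, exact computation, but one has to make sure it is genuinely uniform in the Cayley table and that the real/complex bookkeeping is right. If the full character table is too heavy to justify, I would fall back on a cruder but clearly uniform route: compute the centre of $\mathbb{Q}G$ exactly, and then its primitive idempotents over $\mathbb{R}$. These correspond to Galois orbits of complex characters under complex conjugation.

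\emph{Step 2: the projectors.} Compute $\rho(g)$ for all $g\in G$ by breadth-first search from the generators $A_s$, $s\in S$, and set
\[
P_i=\frac{n_i}{\kappa_i|G|}\sum_{g\in G}\chi_i(g^{-1})\,\rho(g).
\]
This is a finite sum of products of computable reals, so Cauchy names of each $P_i$ are computable. The classical facts I then need are the following. The element $e_i=\frac{n_i}{\kappa_i|G|}\sum_g\chi_i(g^{-1})g$ is the central primitive idempotent of $\mathbb{R}G$ attached to $\rho_i$; it is the sum of the complex central idempotents over the conjugation orbit, and the normalisation $n_i/\kappa_i$ equals the complex degree times the orbit size. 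The $e_i$ are pairwise orthogonal and sum to $1$. Consequently $P_i=\rho(e_i)$ is a resolution of the unit matrix with $\im P_i=V_i$. Moreover, every $A\in\End_G(\rho)$ commutes with each $\rho(g)$, hence with each $P_i$, so $\End_G(\rho)\subseteq C(\mathcal P)$.

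\emph{Step 3: block sizes and basis.} By Lemma~\ref{lem:idem-rank} I recover $d_i=\rank P_i=\tr P_i$ exactly; alternatively $d_i=m_in_i$, with $m_i$ obtained from \eqref{eq:mult-formula} by integer recovery. I discard the indices with $d_i=0$; this is a decision on exact integers, so it is unproblematic. For the remaining $r$ indices, Lemma~\ref{lem:im-ker-computable-from-given-rank} with the exact rank $d_i$ yields a basis $\mathcal B_i$ of $\im P_i$. Concatenating these bases gives $U$, and Proposition~\ref{prop:resolution} shows that $U$ is invertible and that $U^{-1}AU$ is block diagonal with block sizes $(d_1,\dots,d_r)$ for every $A$ in the commutant. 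Finally, $\sum_i d_i=\tr I=n$.
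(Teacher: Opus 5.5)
Your proposal is correct and follows the paper's proof essentially step for step: Dixon's algorithm plus Frobenius--Schur indicators for the real characters, $P_i=\widehat\rho(e_i)$, exact $d_i=\tr P_i$ by integer recovery, bases via Lemma~\ref{lem:im-ker-computable-from-given-rank}, and assembly via Proposition~\ref{prop:resolution}. Two small points to fix. First, your normalisation remark is wrong: $n_i/\kappa_i$ is not the complex degree times the orbit size but $\psi(1)$ divided by the Schur index, namely $\psi(1)$ when $\chi_i=\psi$ or $\chi_i=\psi+\bar\psi$, and $\psi(1)/2$ when $\chi_i=2\psi$. Second, you should keep only one representative of each pair $\{\psi,\bar\psi\}$ with indicator $0$, so that the $\chi_i$ are distinct and the $P_i$ really sum to $I$.
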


For each irreducible real representation $\rho_i$, let
\[
  e_i
  =
  \frac{n_i}{\kappa_i|G|}
  \sum_{g\in G}\chi_i(g^{-1})\,g
  \in\R[G].
\]
These are the primitive central idempotents of $\R[G]$ corresponding to the irreducible real representation types. They satisfy
\[
  e_i^2=e_i,
  \qquad
  e_ie_j=0 \quad (\mbox{for all } i\neq j),
  \qquad
  \sum_{i=1}^k e_i=1.
\]

Let
\[
  \widehat\rho:\R[G]\to M_n(\R)
\]
be the linear extension of $\rho$. Define
\begin{equation}
  P_i=\widehat\rho(e_i)
  =\frac{n_i}{\kappa_i|G|}\sum_{g \in G} \chi_i(g^{-1}) \rho(g)
  \label{eq:projector}
\end{equation}
for $i = 1,\dots, k$.

\begin{proposition}
 Let $P_i$ be given by \eqref{eq:projector} for each $i=1,\dots,k$.
 We have the following:
 \begin{enumerate}
   \item[(i)] $\{P_1,\dots,P_k\}$ is a resolution of the unit matrix;
   \item[(ii)] the image $\im(P_i)=V_i$, where $V_i$ is the $i$-th isotypic component;
   \item[(iii)] each $P_i$ is Type-2 computable from the input.
 \end{enumerate}
 \label{prop:projectors}
\end{proposition}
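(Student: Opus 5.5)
The plan is to transfer the algebraic identities from $\R[G]$ to $M_n(\R)$ through the algebra homomorphism $\widehat\rho$, identify images with isotypic components by restricting to irreducible summands, and check computability directly from formula \eqref{eq:projector}.

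For (i), I would first note that $\widehat\rho:\R[G]\to M_n(\R)$ is a unital algebra homomorphism. This holds because $\rho$ is a group homomorphism with $\rho(1)=I$, and linear extension preserves products of basis elements $g\cdot h = gh$. Applying $\widehat\rho$ to the stated relations $e_i^2=e_i$, $e_ie_j=0$ for $i\neq j$, and $\sum_i e_i=1$ then gives $P_i^2=P_i$, $P_iP_j=0$ and $\sum_i P_i = I$. This is exactly the definition of a resolution of the unit matrix.

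For (ii), I would decompose $\R^n$ into irreducible invariant subspaces, $\R^n=\bigoplus_{j} W^{(j)}$, where each $\rho|_{W^{(j)}}\cong\rho_{\ell(j)}$. Since $P_i$ is a linear combination of the $\rho(g)$, each $W^{(j)}$ is invariant under $P_i$. Under the isomorphism with $W_{\ell(j)}$, the restriction $P_i|_{W^{(j)}}$ is conjugate to $\widehat{\rho_{\ell(j)}}(e_i)$. This is the step I expect to be the main obstacle. I must show that $\widehat{\rho_\ell}(e_i)=\delta_{i\ell}\,I_{W_\ell}$, and the paper only asserts that the $e_i$ are the primitive central idempotents. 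The argument I would give is the following. Because $e_i$ is central, $\widehat{\rho_\ell}(e_i)$ commutes with $\rho_\ell(G)$, so it lies in $D_\ell$, a division algebra. It is also idempotent, so it is either $0$ or $I$. Its trace equals $\frac{n_i}{\kappa_i|G|}\sum_g \chi_i(g^{-1})\chi_\ell(g)=\frac{n_i}{\kappa_i}\langle\chi_\ell,\chi_i\rangle=n_i\delta_{i\ell}$. This trace vanishes for $\ell\neq i$, which forces the value $0$, and it equals $n_i\neq 0$ for $\ell=i$, which forces $I$. Hence $\im P_i$ is the sum of the summands $W^{(j)}$ with $\ell(j)=i$. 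Since the isotypic component $V_i$ is the sum of all invariant subspaces isomorphic to $\rho_i$, we get $\im P_i\subseteq V_i$. For the reverse inclusion, take any such subspace $W$ isomorphic to $\rho_i$. The same computation applied on $W$ shows that $P_i$ acts as the identity there, so $W\subseteq \im P_i$. This gives $\im P_i=V_i$.

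For (iii), I would compute every $\rho(g)$ from the generators by breadth-first search on the Cayley graph, as described earlier, using only finitely many matrix products; these products are computable on Cauchy names. The character values $\chi_i(g^{-1})$ and the constants $n_i$ and $\kappa_i$ are exact data. Here $n_i=\chi_i(1)$, and $\kappa_i=\langle\chi_i,\chi_i\rangle$ is computable exactly, with $g^{-1}$ read off the Cayley table. I take the real character table of $G$ to be part of the available finite group information, computed exactly from the Cayley table or supplied; if it must be computed, I would cite the classical exact algorithms. The finite linear combination \eqref{eq:projector} with these coefficients is then computable, uniformly in the input.
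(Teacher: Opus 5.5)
Your proof is correct. Part (i) is the paper's argument, but in (ii) you take a different route to the key fact that $\widehat{\rho_\ell}(e_i)=\delta_{i\ell}I$.

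\textbf{How the paper argues (ii).} It obtains this fact from the complex theory. It recalls the complex isotypic projectors $\frac{\psi(1)}{|G|}\sum_g\overline{\psi(g)}\rho(g)$. It then uses the trichotomy $\chi_i=\psi$, $\psi+\bar\psi$ or $2\psi$ (for $\kappa_i=1,2,4$) to show that \eqref{eq:projector} is the sum of the complex projectors over the distinct constituents of $\chi_i$.

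\textbf{How you argue (ii).} You stay over $\R$. Centrality puts $\widehat{\rho_\ell}(e_i)$ in the division algebra $D_\ell$, and idempotence forces it to be $0$ or $I$. The real orthogonality relations give trace $n_i\delta_{i\ell}$, which decides between the two.

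\textbf{What each buys.} Your version is more self-contained: it needs only the real Schur lemma and the orthogonality relations already stated in the paper, and it treats all three Schur types uniformly with no case split. The price is that it takes $e_i^2=e_i$ as given, and that is exactly where the normalization $n_i/\kappa_i$ matters. The paper's complex-summation step ($e_i=e_\psi$ or $e_\psi+e_{\bar\psi}$) is what ties this normalization to the known complex idempotents and so justifies those relations.

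\textbf{Where your (iii) is thinner.} The same trichotomy is needed in (iii), and here your proposal says less than the paper. The real character table is not part of the input, so it cannot simply be ``supplied.'' The classical exact algorithms you would cite (Dixon, Unger) compute the \emph{complex} character table. You must add one step: compute the Frobenius--Schur indicators $\nu(\psi)=\frac{1}{|G|}\sum_{g}\psi(g^2)\in\{1,0,-1\}$ exactly from the Cayley table. These decide whether each real irreducible character is $\psi$, $\psi+\bar\psi$ (keeping one representative per conjugate pair) or $2\psi$. That determines $\chi_i$, and hence $n_i=\chi_i(1)$ and $\kappa_i$, before you convert the exact cyclotomic values to Cauchy names.
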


\begin{proof}
 Since $\widehat\rho$ is an algebra homomorphism and
\[
  e_i^2=e_i,\qquad e_ie_j=0\ (\mbox{for all} i\neq j),\qquad \sum_i e_i=1,
\]
we obtain
\[
  P_i^2=P_i,\qquad
  P_iP_j=0\ (\mbox{for all }i\neq j),\qquad
  \sum_iP_i=I.
\]
Thus $\{P_1,\dots,P_k\}$ is a resolution of the unit matrix.

Moreover, the central idempotent $e_i$ acts as the identity on every irreducible subrepresentation isomorphic to $\rho_i$ and as zero on every irreducible subrepresentation not isomorphic to $\rho_i$. Consequently,
\[
  \im(P_i)=V_i,
\]
where $V_i$ is the $i$-th isotypic component, for each
$i \in \{1,\dots,k\}$.

Over $\C$, the projector onto the isotypic component of a complex irreducible character $\psi$ is $\frac{\psi(1)}{|G|}\sum_{g}\overline{\psi(g)}\rho(g)$ \cite{Serre1977}. Each real irreducible character is of the form $\chi_i = \psi$, $\chi_i = \psi + \bar\psi$ with $\psi \neq \bar\psi$, or $\chi_i = 2\psi$, according to whether $\kappa_i = 1$, $2$ or $4$ \cite{Serre1977}. Summing the complex projectors over the distinct complex constituents $\psi$ of $\chi_i$ gives \eqref{eq:projector} in each of the three cases.

 The property (iii) follows from the definition of $P_i$ and the following facts. All matrices $\rho(g)$ have computable
 Cauchy names. The complex character table of $G$ can be computed exactly from the Cayley table, with values in a cyclotomic field \cite{Dixon1967, Unger2006}. The Frobenius--Schur indicator of each \emph{complex} irreducible character $\psi$ then determines whether the corresponding real irreducible character is $\psi$, $\psi+\bar\psi$ or $2\psi$ \cite{Serre1977}. In the case of indicator $0$, the pair $\{\psi,\bar\psi\}$ determines a single irreducible real character, so we retain one representative from each complex-conjugate pair. All of these operations are performed exactly in the cyclotomic field before the resulting real character values are converted to Cauchy names. Hence the real irreducible characters $\chi_i$ are computable exactly, and their values $\chi_i(g^{-1})$ have computable Cauchy names.
 The value $|G|$ is read off from the Cayley table, $n_i = \chi_i(e)$, and
 $\kappa_i = \langle \chi_i, \chi_i \rangle$. All of these are computed exactly.
\end{proof}

In this section, we show that a basis for each nonzero $\im(P_i)$ is uniformly computable.
Let us define $r_i = \rank(P_i)$. From the previous discussion, we can calculate $m_i$
by the formula \eqref{eq:mult-formula}. Hence $r_i = m_i n_i$. By Lemma \ref{lem:im-ker-computable-from-given-rank}, for each $r_i$ we can calculate a basis
for $\im P_i$. Alternatively, since each $P_i$ is idempotent, we can use the trace formula $\rank(P_i)=\tr(P_i)$.
Hence, we can approximate $\tr(P_i)$ to within any precision $<1/2$ and then apply
the integer rounding step.
By invoking Lemma \ref{lem:im-ker-computable-from-given-rank} we can calculate a basis for each $\im P_i$. For generic Cauchy-named matrices, the known rank information enables us to calculate a basis of the image.

We now construct the universal block-diagonalizer $U$.
Let $\mathcal{B}_i$ be the computable basis obtained for the $i$-th isotypic component $V_i$.
Here, we only take the nonzero $V_i$. Since the $P_i$ form a resolution of the unit matrix, we have the direct sum decomposition $\R^n = \bigoplus_i V_i$.

Let $U$ be the matrix obtained by concatenating all the bases $\mathcal{B}_i$ into
a single matrix. Hence, the columns of $U$ form a basis for the whole space $\R^n$.
Thus $\rank U=n$, so $U$ is invertible. Further, $U$ is computable since all the
bases $\mathcal{B}_i$ are computable.

Next, we prove that every matrix $A$ from the commutant preserves every isotypic component, so $U$ block-diagonalizes
the whole commutant.
Let $A\in \End_G(\rho)$. We need to prove that $AP_i = P_iA$ for every $i \in \{1,\dots,k\}$.
Since $A$ is from the commutant, we have $A \rho(g) = \rho(g) A$ for all $g \in G$.
Since $P_i$ is a linear combination of the matrices $\rho(g)$, $g \in G$, it follows that
\[
  A P_i = P_i A
\]
for all $i \in \{1,\dots,k\}$.
Since $P_i$ and $A$ commute, it follows that $A(\im(P_i))\subseteq\im(P_i)$, so the image $\im P_i$ is an invariant subspace for every $A$.
Since $U$ is comprised of the bases $\mathcal{B}_i$ for $\im (P_i)$,
we conclude by Proposition~\ref{prop:resolution} that $U^{-1} A U$ is block-diagonal for every $A$, so $U$ is a universal block diagonalizer for $\End_G(\rho)$.

We now show that when we have an orthogonal representation $\rho$, namely $\rho(G) \subseteq O(n)$,
we can choose the universal block diagonalizer to be orthogonal.

We say that $P$ is an \emph{orthogonal projector} onto a subspace $W$ if $Px \in W$ and $x - Px \in W^{\bot}$ for all $x$.
Equivalently, $P$ is an orthogonal projector if and only if $P^2=P$ and $P^T=P$.

Since the characters are real-valued, we have $\chi_i(g^{-1}) = \chi_i(g)$, and since $\rho$ is orthogonal, we have $\rho(g)^T=\rho(g^{-1})$.
By reindexing the sum in \eqref{eq:projector} with $g \mapsto g^{-1}$ we
have $P_i^T=P_i$. Together with $P_i^2=P_i$ it follows
that $P_i$ is an orthogonal projector. Moreover, symmetric idempotents with $P_iP_j=0$ for all $i\neq j$ have mutually orthogonal images, so the isotypic components $V_i$ are mutually orthogonal.
We now take for every $P_i$ with $\im P_i \neq 0$ a basis of the image of $P_i$
and apply the Gram--Schmidt orthogonalization procedure to obtain an orthonormal basis.
Combining these bases into a matrix $U$, we obtain an orthogonal matrix such that $U^T A U$ is block diagonal for every $A$ from
the commutant.

  \begin{theorem}[UBD for finite group commutants is computable]
  Consider the multi-valued map which takes as input the Cayley table of a finite group $G$, a generating set $S \subseteq G$, the dimension $n$, and $\delta_\R^{n\times n}$-names of the matrices $\rho(s)$, $s \in S$, where the input belongs to the represented subspace of $\operatorname{FinRep}$ consisting of orthogonal representations $\rho:G\to O(n)$, and which returns the set of all $U \in O(n)$ whose consecutive column blocks are orthonormal bases of the nonzero isotypic components of $\rho$ (in the order of the computed characters $\chi_1,\dots,\chi_k$).
  This map is \textbf{uniformly} Type-2 computable. In particular, $U^T A  U$ is block-diagonal for each
  $A \in \mathcal{C}(G, \rho)$.
  \label{thm:ubd-orth}
  \end{theorem}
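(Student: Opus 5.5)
The plan is to assemble the computable pieces already established and check that each step is uniform in the input. From the Cayley table and $S$, a breadth-first search on the Cayley graph gives, for every $g\in G$, a word in $S$. Multiplying the corresponding Cauchy-named matrices yields $\delta_\R^{n\times n}$-names of all $\rho(g)$, since matrix multiplication is computable. Next, by Proposition~\ref{prop:projectors}(iii), the exact real irreducible characters $\chi_1,\dots,\chi_k$ (in a fixed computed order), together with $n_i$, $\kappa_i$ and $|G|$, are obtained from the Cayley table. This gives names of the projectors $P_i$ in \eqref{eq:projector}. Because $\rho$ is orthogonal and $\chi_i(g^{-1})=\chi_i(g)$, reindexing $g\mapsto g^{-1}$ shows $P_i^T=P_i$. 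With Proposition~\ref{prop:projectors}(i), the $P_i$ are therefore mutually orthogonal orthogonal projectors, and by (ii) $\im P_i=V_i$.

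Then, for each $i$, I would compute $r_i=\rank P_i$ exactly via Lemma~\ref{lem:idem-rank}, approximating $\tr P_i$ to within $1/4$ and rounding. This decides exactly which $V_i$ are nonzero, and these indices are output with $d_i=r_i$ in the order of the $\chi_i$. For each $i$ with $r_i>0$, Lemma~\ref{lem:im-ker-computable-from-given-rank} applied to $P_i$ with the advice $r_i$ produces $r_i$ linearly independent columns of $P_i$ spanning $V_i$. I would then run Gram--Schmidt on these columns. Since they are linearly independent, every norm that is divided by during the procedure is strictly positive. Each step is thus a composition of field operations and square roots of positive reals, and is computable. The output is an orthonormal basis of $V_i$. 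Concatenating these bases gives $U$. The columns of different blocks are orthogonal because the $V_i$ are mutually orthogonal, and $\sum_i r_i=\tr\sum_i P_i=n$, so $U\in O(n)$ and its column blocks are orthonormal bases of the nonzero isotypic components, as required. Finally, every $A\in\mathcal C(G,\rho)$ commutes with each $P_i$, which is a linear combination of the $\rho(g)$. Proposition~\ref{prop:resolution} then gives that $U^TAU=U^{-1}AU$ is block diagonal.

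The main obstacle I expect is the Gram--Schmidt step, together with uniformity. A Type-2 machine cannot test whether a vector is zero, so naive Gram--Schmidt on all $n$ columns of $P_i$ would fail. The point is that the exact rank $r_i$, recovered from the trace, lets the minor search in Lemma~\ref{lem:im-ker-computable-from-given-rank} terminate on a provably independent set. Hence all norms appearing in Gram--Schmidt are bounded away from zero, and a positive lower bound is found by search, which makes division and square roots safe. Uniformity holds because every branch depends only on exact finite data (the characters, $r_i$, and the index of the chosen minor), all of which are computed from the input rather than supplied as advice.
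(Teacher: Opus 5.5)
Your proposal is correct and follows the paper's proof essentially step for step. Like the paper, you compute the real irreducible characters and the projectors $P_i$, recover $d_i=\rank P_i$ by rounding $\tr P_i$, extract $d_i$ independent columns via Lemma~\ref{lem:im-ker-computable-from-given-rank}, orthonormalize by Gram--Schmidt, and use $P_i^T=P_i$ (from orthogonality of $\rho$) to get mutually orthogonal blocks and hence $U\in O(n)$; your extra checks, $\sum_i r_i=n$ and the safety of the Gram--Schmidt divisions, only make explicit what the paper leaves to its preceding discussion.
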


\begin{proof}
  On input $(G, S, n, \rho)$, for each $i=1,\dots,k$:
  \begin{enumerate}
      \item compute the real irreducible character $\chi_i$ (Proposition~\ref{prop:projectors});
      \item compute the projector
      \[
        P_i = \frac{n_i}{\kappa_i |G|}\sum_{g\in G} \chi_i(g^{-1}) \rho(g),
        \qquad \text{with } \kappa_i = \langle \chi_i, \chi_i \rangle;
      \]
      \item compute $d_i = \rank P_i = \tr(P_i) = n_i m_i$ exactly, by approximating $\tr(P_i)$ from the Cauchy names of $\rho(g)$ to within an error $<1/2$ and rounding to the nearest integer. If $d_i = 0$, skip this $i$;
      \item using Lemma~\ref{lem:im-ker-computable-from-given-rank} with the known rank $d_i$, select $d_i$ linearly independent columns of $P_i$, which form a basis of $V_i = \im P_i$, and apply Gram--Schmidt orthogonalization to them to obtain an orthonormal basis $\mathcal{B}_i$ of $V_i$. Gram--Schmidt is computable here, since the selected columns are linearly independent, so all norms by which we divide are nonzero.
  \end{enumerate}
  Return $U = [\mathcal{B}_{i_1} | \dots | \mathcal{B}_{i_r}]$, where $i_1 < \dots < i_r$ are the indices with $d_i \neq 0$. By the discussion above, $U$ is orthogonal and $U^TAU$ is block-diagonal for every $A \in \mathcal{C}(G,\rho)$.
\end{proof}

\begin{example}

The group is $\Z/2\Z = \{0, 1\}$ with the Cayley table:

\medskip
\begin{center}
\begin{tabular}{c|cc}
  $+_2$ & $0$ & $1$ \\
\hline
  $0$ & $0$ & $1$ \\
  $1$ & $1$ & $0$
\end{tabular}
\end{center}
\medskip

\noindent

Let $n = 4$. Let
$\rho \colon \Z/2\Z \to \mathrm{GL}_4(\R)$,
\[
  \rho(0) = I_4 =
  \begin{pmatrix}1&0&0&0\\0&1&0&0\\0&0&1&0\\0&0&0&1\end{pmatrix},
  \qquad
  \rho(1) = J_4 =
  \begin{pmatrix}0&0&0&1\\0&0&1&0\\0&1&0&0\\1&0&0&0\end{pmatrix}.
\]

  The group $\Z/2\Z$ has two conjugacy classes $\{0\}$ and $\{1\}$, 
  and two irreducible representations, both of degree~$1$ and of real type ($\kappa_1 = \kappa_2 = 1$):

\medskip
\begin{center}
\begin{tabular}{c|cc}
  & $0$ & $1$ \\
\hline
$\chi_1$  & $1$ & $1$ \\
$\chi_2$  & $1$ & $-1$
\end{tabular}
\end{center}
\medskip

\noindent
The degrees are $n_1 = \chi_1(0) = 1$ and $n_2 = \chi_2(0) = 1$.

For each $\rho(g)$, the algorithm approximates the
traces:
\[
  \chi_\rho(0) = \mathrm{tr}(I_4) = 4, \qquad
  \chi_\rho(1) = \mathrm{tr}(J_4) = 0.
\]

Since $\kappa_i = 1$ and $g^{-1} = g$ for all $g \in \Z/2\Z$, formula \eqref{eq:mult-formula} becomes $m_i = \frac{1}{|G|} \sum_{g \in G}
\chi_\rho(g)\, \chi_i(g)$, with $|G| = 2$:

\medskip
\noindent
\textbf{$\chi_1$:}
\[
  m_1 = \frac{1}{2}\bigl(\chi_\rho(0) \cdot \chi_1(0)
        + \chi_\rho(1) \cdot \chi_1(1)\bigr)
      = \frac{1}{2}(4 \cdot 1 + 0 \cdot 1) = 2.
\]

\noindent
\textbf{$\chi_2$:}
\[
  m_2 = \frac{1}{2}\bigl(\chi_\rho(0) \cdot \chi_2(0)
        + \chi_\rho(1) \cdot \chi_2(1)\bigr)
      = \frac{1}{2}(4 \cdot 1 + 0 \cdot (-1)) = 2.
\]

\medskip
\noindent
In the Type-2 setting, these sums are only approximated, but once the
error is less than $1/2$, rounding recovers the exact integers
$m_1 = 2$, $m_2 = 2$.

For the same reasons, formula \eqref{eq:projector} becomes $P_i = \frac{n_i}{|G|} \sum_{g \in G} \chi_i(g)\, \rho(g)$:

\medskip
\noindent
\textbf{Projection $P_1$} ($n_1 = 1$, $\chi_1(0) = 1$,
$\chi_1(1) = 1$):
\[
  P_1 = \frac{1}{2}\bigl(\rho(0) + \rho(1)\bigr)
      = \frac{1}{2}(I_4 + J_4)
      = \frac{1}{2}
        \begin{pmatrix}
          1 & 0 & 0 & 1 \\
          0 & 1 & 1 & 0 \\
          0 & 1 & 1 & 0 \\
          1 & 0 & 0 & 1
        \end{pmatrix}.
\]
$\operatorname{Im} P_1 = \{x \in \R^4 : J_4 x = x\}$, confirming
$d_1 = \mathrm{rank}(P_1) = 2$.

\medskip
\noindent
\textbf{Projection $P_2$} ($n_2 = 1$, $\chi_2(0) = 1$,
$\chi_2(1) = -1$):
\[
  P_2 = \frac{1}{2}\bigl(\rho(0) - \rho(1)\bigr)
      = \frac{1}{2}(I_4 - J_4)
      = \frac{1}{2}
        \begin{pmatrix}
          1 & 0 & 0 & -1 \\
          0 & 1 & -1 & 0 \\
          0 & -1 & 1 & 0 \\
          -1 & 0 & 0 & 1
        \end{pmatrix}.
\]
$\operatorname{Im} P_2  = \{x \in \R^4 : J_4 x = -x\}$, confirming
$d_2 = \mathrm{rank}(P_2) = 2$.

The universal block diagonalizer is
\[
  U =\frac{1}{\sqrt{2}}
      \begin{pmatrix}
        1 & 0 & 1 & 0 \\
        0 & 1 & 0 & 1 \\
        0 & 1 & 0 & -1 \\
        1 & 0 & -1 & 0
      \end{pmatrix}.
\]
\end{example}

\section{Blockwise QR in isotypic coordinates}
\label{sec:qr-fact-after-univ-red}
Let $\rho$ be orthogonal and let $A$ be a nonsingular $n \times n$ matrix such that $A \in \End_G(\rho)$.
Let $U$ be the orthogonal universal block diagonalizer for $\End_G(\rho)$ from Theorem~\ref{thm:ubd-orth}.

We use the following terminology in this section. A factorization
\[
  A=QX
\]
will be called a \emph{$QX$-type factorization relative to $U$} if $Q$ is orthogonal and $U^TXU$ is block diagonal with upper-triangular diagonal blocks. This terminology is motivated by the structure-preserving factorization for centrosymmetric matrices introduced by Burnik~\cite{Burnik2015}. For the standard centrosymmetric block-diagonalizer, the construction below recovers that factorization (see Section~\ref{sec:examples}).

\begin{theorem}
  Given the input of Theorem~\ref{thm:ubd-orth} together with a $\delta_\R^{n\times n}$-name of a nonsingular matrix $A \in \mathcal{C}(G,\rho)$, one can uniformly compute $U$ as in Theorem~\ref{thm:ubd-orth} and matrices $Q$, $X$ such that $A = QX$, $Q$ is orthogonal, and
  \[
    U^TXU = \diag(R_1,\dots,R_r),
  \]
  where each $R_j$ is upper triangular with positive diagonal. Equivalently, ordinary $QR$ factorizations are computed blockwise after passing to the isotypic coordinates determined by $U$.
  \label{thm:qr}
\end{theorem}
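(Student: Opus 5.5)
We first compute $U$ by Theorem~\ref{thm:ubd-orth}, together with the exact block sizes $d_1,\dots,d_r$ obtained by the trace rounding of Lemma~\ref{lem:idem-rank}. Next we form $B=U^TAU$; its entries are finite sums of products of computable reals, so $B$ has a computable Cauchy name. Since $A\in\mathcal{C}(G,\rho)$, Proposition~\ref{prop:resolution} (applied via Proposition~\ref{prop:projectors}) gives $B=\diag(B_1,\dots,B_r)$ with $B_j\in M_{d_j}(\R)$. Because the $d_j$ are known exactly, we simply read off the entries of each $B_j$, and we never test whether any entry is zero. Each $B_j$ is nonsingular, since $\det A=\prod_j\det B_j\neq 0$.

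Second, we compute an ordinary $QR$ factorization $B_j=Q_jR_j$ with $R_j$ upper triangular with positive diagonal. We do this by Gram--Schmidt on the columns $b_1,\dots,b_{d_j}$ of $B_j$. Since $B_j$ is nonsingular, its columns are linearly independent, so at step $\ell$ the residual $b_\ell-\sum_{s<\ell}\langle q_s,b_\ell\rangle q_s$ is nonzero. Its norm is therefore a computable positive real, and dividing by it is a computable operation on Cauchy names. This is the same argument as in step 4 of the proof of Theorem~\ref{thm:ubd-orth}. The coefficients $R_j(s,\ell)=\langle q_s,b_\ell\rangle$ and $R_j(\ell,\ell)=\|\text{residual}\|>0$ are computable, and $Q_j$ is orthogonal.

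Finally, we set $\widetilde Q=\diag(Q_1,\dots,Q_r)$ and $\widetilde R=\diag(R_1,\dots,R_r)$, so that $B=\widetilde Q\widetilde R$, and we transform back:
\[
  Q=U\widetilde QU^T,\qquad X=U\widetilde RU^T.
\]
Then $QX=U\widetilde Q\widetilde RU^T=UBU^T=A$, using $UU^T=I$. Also $Q$ is a product of orthogonal matrices, hence orthogonal, and $U^TXU=\widetilde R=\diag(R_1,\dots,R_r)$, as required. All operations are uniform in the input, so the whole map is uniformly computable. The equivalence stated at the end of the theorem is just this construction read in the coordinates given by $U$.

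The main obstacle is justifying that the block structure of $B$ can be used exactly, without any zero test on the approximations of $B$. This is resolved because the partition is dictated by the exactly computed $d_j$ from Theorem~\ref{thm:ubd-orth}, not by inspecting $B$. The off-diagonal blocks are then known to be $0$ by the theorem and can simply be discarded. The only remaining point is the nonvanishing of the Gram--Schmidt norms, which follows from the nonsingularity of $A$.
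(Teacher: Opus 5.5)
Your proposal is correct and follows the same route as the paper: compute $U$, split $U^TAU$ into its nonsingular diagonal blocks, compute each block's $QR$ factorization by Gram--Schmidt (all norms are nonzero), and conjugate the block-diagonal factors back by $U$. You also state explicitly that the blocks are read off using the exactly computed sizes $d_j$, so no zero tests on the off-diagonal entries are needed; the paper's proof leaves this point implicit.
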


The following procedure uniformly computes this factorization.
\begin{enumerate}
  \item Compute $U$ and let $U^T A U=\diag(A_1,\dots,A_r)$.
  \item Compute the ordinary $QR$ factorizations $A_j=Q_j R_j$, with $R_j$ upper triangular with positive diagonal.
  \item Let $Q=U \diag(Q_1,\dots,Q_r) U^T$ and $X=U \diag(R_1,\dots,R_r) U^T$.
\end{enumerate}

\begin{proof}
  Each block $A_j$ is nonsingular, since $A$ is. The $QR$ factorization of a nonsingular matrix is computable by Gram--Schmidt orthogonalization of its columns: the columns are linearly independent, so all norms by which we divide are nonzero reals. Then $Q$ is orthogonal as a product of orthogonal matrices, and
  $QX = U\diag(Q_1R_1,\dots,Q_rR_r)U^T = U(U^TAU)U^T = A$.
\end{proof}

Note that $X$ itself is, in general, not upper triangular in the standard basis. Only $U^TXU$ is block diagonal with upper-triangular diagonal blocks. Thus, $A=QX$ is a $QX$-type factorization relative to $U$ in the sense defined above, while the ordinary $QR$ factorizations occur in the isotypic coordinates.
Further, we do not claim that the matrices $Q$ and $X$ lie in the commutant $\End_G(\rho)$ in general. In the following, we
give a condition under which the transformed-back factors $Q$ and $X$ are also in the commutant. In Corollary~\ref{cor:qr-wedderburn} below we show that both transformed-back factors lie in the commutant in general when, instead of an isotypic basis, a Wedderburn-adapted orthogonal basis is given. Computing such a basis from the group data is left for future work.

Here we specify one sufficient condition for the factors $Q$ and $X$ to lie in the commutant.
\begin{proposition}
  Let $\rho$ be orthogonal and let $A \in \End_G(\rho)$ be nonsingular. If every occurring multiplicity satisfies $m_i=1$, then the factors $Q$ and $X$ of Theorem~\ref{thm:qr} lie in the commutant $\End_G(\rho)$. Moreover, $U^TXU$ is diagonal.
  \label{prop:all-mi-equal-one}
\end{proposition}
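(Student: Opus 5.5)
The plan is to show that, when every occurring multiplicity equals one, each diagonal block $A_j$ of $U^TAU$ is a positive scalar multiple of an orthogonal matrix. Then its $QR$ factorization is forced to be $A_j = (c_j^{-1}A_j)(c_jI)$, and everything else follows from the structure of $X$.

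\textbf{Step 1 (block structure).} First I would pass to isotypic coordinates. Because $U$ is orthogonal and block-diagonalizes the commutant, and each $\rho(g)$ itself lies in the commutant (since $\rho(g)$ commutes with the linear combinations $P_i$ of the $\rho(h)$), we have
\[
  U^T\rho(g)U=\diag(\sigma_1(g),\dots,\sigma_r(g)),
\]
where each $\sigma_j$ is an orthogonal representation on $\R^{d_j}$. The hypothesis $m_i=1$ means $\rho|_{V_i}\cong\rho_i$, so each $\sigma_j$ is irreducible. The block $A_j$ of $U^TAU$ commutes with $\sigma_j(g)$ for all $g$. Transposing and using $\sigma_j(g)^T=\sigma_j(g^{-1})$, we get that $A_j^T$ commutes with every $\sigma_j(g)$ as well.

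\textbf{Step 2 (the key step).} Put $S_j=A_j^TA_j$. This matrix is symmetric, positive definite (since $A_j$ is nonsingular, as $A$ is), and commutes with $\sigma_j(G)$. Each eigenspace of $S_j$ is therefore $\sigma_j$-invariant. By irreducibility, $S_j$ has a single eigenvalue, so $S_j=c_j^2I$ with $c_j>0$. This is the main obstacle: it is where irreducibility, i.e.\ $m_i=1$, is genuinely used, including in the complex and quaternionic cases $\kappa_i\in\{2,4\}$, where $A_j$ need not itself be scalar. Symmetry of $S_j$ is what lets us avoid the real Schur lemma's division-algebra subtlety.

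\textbf{Step 3 (the factorization).} It follows that $O_j=c_j^{-1}A_j$ is orthogonal and $A_j=O_j\,(c_jI)$ is a $QR$ factorization with upper triangular factor having positive diagonal. By uniqueness of such factorizations for nonsingular matrices, $Q_j=O_j$ and $R_j=c_jI$. Hence $U^TXU=\diag(c_1I,\dots,c_rI)$ is diagonal. Transforming back gives
\[
  X=\sum_j c_j P_{i_j},
\]
using that $U_jU_j^T=P_{i_j}$, where $U_j$ is the orthonormal column block for $V_{i_j}$ and $P_{i_j}$ is the orthogonal projector onto it. This is a linear combination of the projectors \eqref{eq:projector}, hence of the matrices $\rho(g)$, so $X\in\End_G(\rho)$. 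Finally, $X$ is invertible and $Q=AX^{-1}$, where $X^{-1}=\sum_j c_j^{-1}P_{i_j}$ is again in the commutant. Since the commutant is an algebra, $Q\in\End_G(\rho)$ as well.
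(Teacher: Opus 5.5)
Your argument is correct and is essentially the paper's proof. You pass to the blocks $\sigma_j$ (the paper's $\rho_j'$), use irreducibility to force $A_j^TA_j=c_j^2I$, and conclude by uniqueness of the positive-diagonal $QR$ factorization. Your eigenspace argument for $S_j$ is Schur's lemma unpacked. The paper instead observes that $\End_G(\rho_j')$ is a division algebra closed under transposition, and that $A_j^TA_j-\lambda I$ is a singular element of it. Your derivation $Q=AX^{-1}$ replaces the paper's remark that $Q_j$ and $R_j$ both lie in $\End_G(\rho_j')$. Both routes work.

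One justification appears twice and is wrong as stated. You infer commutation with $\rho(g)$ from being ``a linear combination of the matrices $\rho(h)$''. That fails for non-abelian $G$. Each $\rho(h)$ is itself such a combination, yet it does not commute with $\rho(g)$ unless $gh=hg$; for example, two transpositions in the permutation representation of $S_3$ give non-commuting matrices. In particular, the claim in Step~1 that each $\rho(g)$ lies in the commutant is false if it means $\End_G(\rho)$.

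What is true is that each $P_i$ commutes with every $\rho(g)$. The reason is that its coefficients $\chi_i(h^{-1})$ are class functions, i.e.\ $e_i$ is central in $\R[G]$; equivalently, each $V_i$ is $\rho$-invariant. This is all Step~1 needs: $\rho(g)\in C(\mathcal P)$, so Proposition~\ref{prop:resolution} applies.

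In Step~3 the cleanest repair avoids the issue altogether. The matrix $U^TXU=\diag(c_1I,\dots,c_rI)$ commutes with the block-diagonal matrix $U^T\rho(g)U=\diag(\sigma_1(g),\dots,\sigma_r(g))$. Hence $X\in\End_G(\rho)$, and then $Q=AX^{-1}\in\End_G(\rho)$ as you say.
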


\begin{proof}
Let $U_j$ denote the column block of $U$ spanning the isotypic component $V_j$, and let $\rho_j'(g) = U_j^T\rho(g)U_j$. Since $V_j$ is invariant under every $\rho(g)$, $\rho_j'$ is an orthogonal representation of $G$ on $\R^{d_j}$, and $\rho(g) = U\diag(\rho_1'(g),\dots,\rho_r'(g))U^T$. Since $m_j = 1$, the representation $\rho_j'$ is irreducible.
Let $A_j = U_j^TAU_j$. Since $V_j$ is invariant under $A$ and under every $\rho(g)$, and $A$ commutes with $\rho(g)$, the matrix $A_j$ commutes with every $\rho_j'(g)$, i.e., $A_j \in D := \End_G(\rho_j')$.
By the real Schur lemma, $D$ is a division algebra: every nonzero element of $D$ is invertible.
Since $\rho_j'$ is orthogonal, $D$ is closed under transposition, so $A_j^TA_j \in D$, and $A_j^TA_j$ is symmetric.
Let $\lambda$ be a real eigenvalue of $A_j^TA_j$. Then $A_j^TA_j - \lambda I \in D$ is singular, hence zero, so
\[
  A_j^T A_j = \alpha_j^2 I
\]
for some $\alpha_j \geq 0$. Since $A$ is nonsingular, so is $A_j$, and hence $\alpha_j > 0$.
We now set $Q_j= (1/\alpha_j) A_j$ and $R_j=\alpha_j I$. Then $Q_j$ is orthogonal, $R_j$ is upper triangular with positive diagonal, and $Q_jR_j=A_j$. By uniqueness of the $QR$ factorization with positive diagonal, these are the factors computed in Theorem~\ref{thm:qr}. Both $Q_j$ and $R_j$ lie in $D$, so they commute with every $\rho_j'(g)$. Consequently, $Q = U\diag(Q_1,\dots,Q_r)U^T$ and $X = U\diag(R_1,\dots,R_r)U^T$ commute with every $\rho(g) = U\diag(\rho_1'(g),\dots,\rho_r'(g))U^T$, i.e., $Q, X \in \End_G(\rho)$. Finally, $U^TXU = \diag(\alpha_1I,\dots,\alpha_rI)$ is diagonal.
\end{proof}

The above condition is sufficient. However, it is not necessary (see Section~\ref{sec:examples}, where all multiplicities may exceed $1$, and Corollary~\ref{cor:qr-wedderburn}).

\subsection{Structure-preserving factorization in a Wedderburn-adapted basis}
We now show that both transformed-back factors can be chosen in the commutant in general, provided a basis finer than the isotypic one is given.

Let $D \in \{\R, \C, \HH\}$ and $\kappa = \dim_\R D$. We realize $D$ as real $\kappa \times \kappa$ matrices via its left-regular representation $L \colon D \to M_\kappa(\R)$ with respect to the standard basis $\{1\}$, $\{1, i\}$, or $\{1,i,j,k\}$, respectively. The map $L$ is an injective homomorphism of real algebras with $L(1) = I$, and $L(\bar d) = L(d)^T$ for all $d \in D$, since left multiplication by $d$ is $|d|$ times an isometry of $D \cong \R^\kappa$. For an $N \times N$ matrix $Y$ over $D$, its \emph{realification} $Y^\R \in M_{N\kappa}(\R)$ is obtained by replacing each entry $y$ of $Y$ by the block $L(y)$. Realification is a homomorphism of real algebras, and $(Y^*)^\R = (Y^\R)^T$, where $Y^*$ denotes the conjugate transpose. In particular, the realification of a unitary matrix over $D$ (i.e., $Y^*Y = I$) is orthogonal.

For each $i$ with $m_i > 0$, let $p_i = n_i/\kappa_i$. This is an integer, since $W_i$ is a vector space over the division algebra $D_i$ of dimension $p_i$. Let
\[
  \mathcal{S}_i = \bigl\{ (Y \otimes I_{p_i})^\R : Y \in M_{m_i}(D_i) \bigr\} \subseteq M_{d_i}(\R),
\]
where $Y \otimes I_{p_i}$ is the $m_ip_i \times m_ip_i$ matrix over $D_i$ with blocks $y_{kl} I_{p_i}$, and $d_i = m_i n_i = m_ip_i\kappa_i$.

\begin{definition}
Let $\rho$ be orthogonal. An orthogonal matrix $U \in O(n)$ is \emph{Wedderburn-adapted} for $\mathcal{C}(G,\rho)$ if
\[
  U^T\, \mathcal{C}(G,\rho)\, U = \mathcal{S}_{i_1} \oplus \cdots \oplus \mathcal{S}_{i_r},
\]
where $i_1 < \dots < i_r$ are the indices with $m_i > 0$.
\end{definition}

\begin{example}
Let $G=\Z/3\Z$ and let $R=\left(\begin{smallmatrix}-1/2&-\sqrt3/2\\ \sqrt3/2&-1/2\end{smallmatrix}\right)$ be the rotation by $2\pi/3$.
Let $\rho(1)=\diag(R,R)$ on $\R^4$. The rotation $R$ is a real irreducible representation of complex type, with
$n_i=\kappa_i=2$ and $p_i=1$, and it occurs in $\rho$ with multiplicity $m_i=2$; the trivial representation does not occur.
The $2\times2$ matrices commuting with $R$ are exactly $L(a+bi)=\left(\begin{smallmatrix}a&-b\\ b&a\end{smallmatrix}\right)$, so
\[
  \mathcal{C}(G,\rho)=\bigl\{Y^\R : Y\in M_2(\C)\bigr\}=\mathcal{S}_i,
\]
and $U=I_4$ is Wedderburn-adapted.
In contrast, $U'=\diag(1,1,1,-1)$ is also orthogonal, and its columns form an orthonormal basis of the (single) isotypic component.
But for $F=\diag(1,-1)$, the off-diagonal block of $U'^TAU'$ is $L(y_{12})F=\left(\begin{smallmatrix}a&b\\ b&-a\end{smallmatrix}\right)$,
which is not of the form $L(y)$ unless $y_{12}=0$. Hence $U'$ is not Wedderburn-adapted.
\end{example}

For the remainder of this subsection we assume that a Wedderburn-adapted orthogonal matrix $U$ is supplied as part of the input. Abstractly, the commutant on an isotypic component has the Wedderburn form $M_{m_i}(D_i)$, but identifying it with the specific real matrix model $\mathcal S_i$ by an orthogonal change of basis requires a choice compatible with the transpose involution. We do not prove the computability of such a choice here. In the first example of Section~\ref{sec:examples}, where $D_\pm=\R$ and $n_\pm=1$, the standard isotypic orthonormal basis is already Wedderburn-adapted.

\begin{corollary}
  Let $\rho$ be orthogonal. There exists a Type-2 machine which, given $\delta_\R^{n\times n}$-names of a Wedderburn-adapted orthogonal matrix $U$ for $\mathcal{C}(G,\rho)$ and of a nonsingular matrix $A \in \mathcal{C}(G,\rho)$, computes $\delta_\R^{n\times n}$-names of matrices $Q$, $X$ such that:
  \begin{enumerate}
    \item[(a)] $A = QX$;
    \item[(b)] $Q \in \mathcal{C}(G,\rho)$ and $Q \in O(n)$;
    \item[(c)] $X \in \mathcal{C}(G,\rho)$;
    \item[(d)] $U^TXU$ is upper triangular with positive diagonal entries.
  \end{enumerate}
  The matrices $Q$ and $X$ are uniquely determined by (a), (b) and (d).
  \label{cor:qr-wedderburn}
\end{corollary}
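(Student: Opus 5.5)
The plan is to work entirely in the coordinates given by $U$, compute an ordinary real $QR$ factorization there, and show that the structure of $\mathcal S_{i_1}\oplus\cdots\oplus\mathcal S_{i_r}$ forces both factors back into the commutant.

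\textbf{Computation.} The machine first computes $B=U^TAU$ from the names of $U$ and $A$. It then computes the ordinary $QR$ factorization $B=Q_BR_B$ by Gram--Schmidt on the columns of $B$. Here $Q_B\in O(n)$ and $R_B$ is upper triangular with positive diagonal. This is computable exactly as in the proof of Theorem~\ref{thm:qr}: $B$ is nonsingular, so its columns are linearly independent and every norm we divide by is a nonzero real. The machine outputs
\[
Q=UQ_BU^T,\qquad X=UR_BU^T .
\]
Then (a) holds because $QX=UBU^T=A$, $Q$ is orthogonal, and (d) holds because $U^TXU=R_B$. Uniqueness follows the same way. If $Q',X'$ satisfy (a), (b), (d), then $U^TQ'U\cdot U^TX'U$ is a $QR$ factorization of $B$ with positive diagonal. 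Uniqueness of such factorizations for nonsingular $B$ gives $U^TQ'U=Q_B$ and $U^TX'U=R_B$.

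\textbf{Membership in the commutant.} It remains to prove (b) and (c), that is, $Q_B,R_B\in\mathcal S_{i_1}\oplus\cdots\oplus\mathcal S_{i_r}=U^T\mathcal C(G,\rho)U$. By Wedderburn-adaptedness, $B=\bigoplus_j (Y_j\otimes I_{p_j})^\R$ with $Y_j\in M_{m_j}(D_j)$, and each $Y_j$ is invertible because $B$ is. I will construct a $QR$ factorization of $B$ inside this algebra and then invoke uniqueness.

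For each $j$, factor $Y_j=Q_{Y}R_{Y}$ over $D_j$. Here $Q_Y^*Q_Y=I$ and $R_Y$ is upper triangular with positive real diagonal entries. To do this, run Gram--Schmidt on the columns of $Y_j$ in the right $D_j$-module $D_j^{m_j}$ with the form $\langle u,v\rangle=u^*v$, putting scalar coefficients on the right. The form is $D_j$-sesquilinear and positive definite even for $\HH$. The columns of $Y_j$ are right-independent since $Y_j$ is invertible, so the process never divides by zero. Normalizing by the real positive norm gives a positive real diagonal.

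Then pass to real matrices:
\begin{itemize}
\item $Y\mapsto (Y\otimes I_p)^\R$ is a homomorphism of real algebras that respects ${}^*\mapsto{}^T$. Hence $(Q_Y\otimes I)^\R$ is orthogonal and the two realified factors multiply to $(Y_j\otimes I)^\R$.
\item $R_Y\otimes I_p$ is still upper triangular over $D_j$. Its realification replaces each entry by the block $L(r)$. Below-diagonal entries are $0$, so those blocks vanish. Each diagonal entry is a positive real $r$, and $L(r)=rI_\kappa$.
\item Consequently $(R_Y\otimes I)^\R$ is a real upper triangular matrix with positive diagonal.
\end{itemize}
Taking the direct sum over $j$ gives a $QR$ factorization of $B$ whose factors lie in $\bigoplus_j\mathcal S_{i_j}$. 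By uniqueness it equals $(Q_B,R_B)$. Hence $Q=UQ_BU^T$ and $X=UR_BU^T$ lie in $\mathcal C(G,\rho)$.

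\textbf{Expected obstacle.} The delicate step is the quaternionic Gram--Schmidt, because of noncommutativity. One must check that the left-regular realification $L$ makes right scalar multiplication by $d$ correspond consistently to the blocks $L(d)$, and that orthonormality in the $D$-module sense corresponds to real orthonormality of the realified columns. I expect both to follow from $L(\bar d)=L(d)^T$ and $(Y^*)^\R=(Y^\R)^T$. Note that this step is only an existence argument; the algorithm itself never computes over $D_j$.
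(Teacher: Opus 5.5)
Your proposal is correct and follows the paper's proof essentially step for step. The paper also computes $M=U^TAU$, takes its real Gram--Schmidt $QR$ factorization, conjugates back by $U$, and deduces uniqueness from uniqueness of the positive-diagonal $QR$ factorization of $M$. It likewise proves membership in the commutant via a $D_i$-Gram--Schmidt factorization $Y_i=\hat Q_i\hat R_i$ with right scalars, realification, and uniqueness.

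The obstacle you anticipate does not arise. The factorization $Y_i=\hat Q_i\hat R_i$ is a matrix identity over $D_i$, and realification is a real-algebra homomorphism with $(Y^*)^\R=(Y^\R)^T$, so no separate check about right scalar multiplication is needed.
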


\begin{proof}
The machine computes the matrix $M = U^TAU$ and its classical $QR$ factorization $M = \tilde Q \tilde R$ with $\tilde R$ upper triangular with positive diagonal, by Gram--Schmidt orthogonalization of the columns of $M$. This is computable, since $M$ is nonsingular, so all norms by which we divide are nonzero reals. It outputs $Q = U\tilde QU^T$ and $X = U\tilde RU^T$. Then (a) holds, $Q$ is orthogonal, and $U^TXU = \tilde R$ satisfies (d). Uniqueness follows from the uniqueness of the $QR$ factorization of $M$ with positive diagonal.

It remains to show that $Q, X \in \mathcal{C}(G,\rho)$, i.e., that $\tilde Q$ and $\tilde R$ lie in $\bigoplus_i \mathcal{S}_i$. Since $U$ is Wedderburn-adapted, $M = \diag(M_{i_1},\dots,M_{i_r})$ with $M_i = (Y_i \otimes I_{p_i})^\R$ for some $Y_i \in M_{m_i}(D_i)$, and each $Y_i$ is invertible since $M$ is. The usual Gram--Schmidt construction also applies for $D_i=\HH$ when $D_i^{m_i}$ is regarded as a right $D_i$-module with its standard quaternionic Hermitian inner product. The diagonal entries of the resulting upper-triangular factor may be chosen to be positive real numbers. Using the standard inner product over $D_i$ and Gram--Schmidt with scalars acting from the right yields $Y_i = \hat Q_i \hat R_i$ with $\hat Q_i$ unitary and $\hat R_i$ upper triangular with real positive diagonal entries. Hence
\[
  M_i = \bigl((\hat Q_i \otimes I_{p_i})^\R\bigr)\bigl((\hat R_i \otimes I_{p_i})^\R\bigr),
\]
where the first factor is orthogonal, being the realification of a unitary matrix, and lies in $\mathcal{S}_i$. The second factor lies in $\mathcal{S}_i$ and is a real upper triangular matrix with positive diagonal: the entries of $\hat R_i \otimes I_{p_i}$ below the diagonal vanish, and each positive real diagonal entry $r$ becomes the block $rI_{\kappa_i}$. Assembling the blocks gives a $QR$ factorization of $M$ with positive diagonal whose factors lie in $\bigoplus_i \mathcal{S}_i$. By uniqueness, it equals $M = \tilde Q\tilde R$. Hence $\tilde Q, \tilde R \in \bigoplus_i \mathcal{S}_i = U^T\mathcal{C}(G,\rho)U$, i.e., $Q, X \in \mathcal{C}(G,\rho)$.
\end{proof}

Note that the machine of Corollary~\ref{cor:qr-wedderburn} needs no access to the group: all structural information is encoded in the Wedderburn-adapted matrix $U$.

\section{Examples}
\label{sec:examples}
\noindent\textbf{Centrosymmetric matrices of even order.}
Let $n = 2p$ and let $J$ be the $n \times n$ \emph{exchange matrix}. Matrices $A$ such that $AJ=JA$ are called \emph{centrosymmetric} \cite{Weaver1985}.
The matrix $J$ is a symmetric orthogonal involution, so it defines an orthogonal representation of $C_2 = \Z/2\Z$ on $\R^n$.
The eigenspaces of $J$ corresponding to the eigenvalues $+1$ and $-1$ are
\[
  V_+ = \{v : Jv=v \}, \quad V_- = \{v: Jv = -v\},
\]
both of dimension $p$. Since $J$ is symmetric, $V_+$ and $V_-$ are orthogonal. They are the isotypic components of this representation, with multiplicities $m_+ = m_- = p$.
Every centrosymmetric $A$ leaves $V_+$ and $V_-$ invariant: if $Jv = \pm v$, then $J(Av) = AJv = \pm Av$.
Let $\mathcal{U}=\{u_1,\dots,u_p\}$ be an orthonormal basis for $V_+$ and $\mathcal{V}=\{v_1,\dots,v_p\}$ an orthonormal basis for $V_-$.
We form $U$ by setting the first $p$ columns to be the elements of $\mathcal{U}$ and the last $p$ columns to be
the elements of $\mathcal{V}$. Since $V_+ \perp V_-$ and both bases are orthonormal, $U$ is orthogonal.
Now, $U$ block-diagonalizes every centrosymmetric $A$, i.e., $U^TAU = \diag(A_+, A_-)$ for every centrosymmetric $A$.
For nonsingular centrosymmetric $A$, we can use this to calculate ordinary $QR$ factorizations in the two isotypic blocks and then transform the factors back:
\begin{enumerate}
  \item Calculate $U^TAU=\diag(A_+, A_-)$.
  \item Calculate $A_+=Q_+ R_+$ and $A_-=Q_- R_-$.
  \item Calculate $Q=U \diag(Q_+,Q_-) U^T$ and $X=U \diag(R_+,R_-) U^T$.
\end{enumerate}
Note that both $Q$ and $X$ are centrosymmetric. Indeed, $J = U\diag(I_p, -I_p)U^T$, which commutes with $U\diag(Y_+,Y_-)U^T$ for any $p\times p$ blocks $Y_+$, $Y_-$. Thus, here the factors lie in the commutant although $m_\pm = p$ may exceed $1$, which shows that the condition of Proposition~\ref{prop:all-mi-equal-one} is not necessary. This is the special case of Corollary~\ref{cor:qr-wedderburn} in which $D_\pm = \R$ and $n_\pm = 1$, so that $U$ is Wedderburn-adapted. If $U$ is chosen to be the standard centrosymmetric block-diagonalizer, the factorization obtained here is the structure-preserving $QX$ factorization introduced by Burnik~\cite{Burnik2015}. Steele, Yalim and Welfert~\cite{steele2018} subsequently showed that the same factorization can be obtained from ordinary $QR$ factorizations of the two reduced blocks.
For odd order $n = 2p+1$, the same construction applies, with $\dim V_+ = p+1$ and $\dim V_- = p$.

\medskip
\noindent\textbf{Wedderburn-adapted example with $n_i>1$.}
Let $\pi$ be the permutation representation of the symmetric group $S_3$ on $\R^3$, and let $\rho=\pi\oplus\pi$ on $\R^6$. The real irreducible representations of $S_3$ are the trivial, the sign and the two-dimensional standard representation $\sigma$, all of real type. Since $\chi_\pi = \chi_{\mathrm{triv}} + \chi_\sigma$, we have $m_{\mathrm{triv}} = 2$, $m_{\mathrm{sign}} = 0$ and $m_\sigma = 2$. The commutant of $\pi$ is $\{\alpha I_3 + \beta\,zz^T : \alpha,\beta\in\R\}$, where $z=(1,1,1)^T$, and hence
\[
  \mathcal{C}(S_3,\rho)=\left\{\begin{pmatrix} A_{11} & A_{12}\\ A_{21} & A_{22}\end{pmatrix} : A_{kl}=\alpha_{kl}I_3+\beta_{kl}\,zz^T\right\}.
\]
Let $e=z/\sqrt3$, $u=(1,-1,0)^T/\sqrt2$ and $v=(1,1,-2)^T/\sqrt6$, and let
\[
  U=\left[\begin{pmatrix}e\\0\end{pmatrix},\begin{pmatrix}0\\e\end{pmatrix},\begin{pmatrix}u&v\\0&0\end{pmatrix},\begin{pmatrix}0&0\\u&v\end{pmatrix}\right]\in O(6).
\]
Using $e^Tzz^Te=3$ and $u^Tz=v^Tz=0$, one obtains
\[
  U^TAU=\bigl[\alpha_{kl}+3\beta_{kl}\bigr]_{k,l=1}^{2}\ \oplus\ \bigl(\bigl[\alpha_{kl}\bigr]_{k,l=1}^{2}\otimes I_2\bigr),
\]
so $U^T\mathcal{C}(S_3,\rho)U=\mathcal{S}_{\mathrm{triv}}\oplus\mathcal{S}_\sigma$ and $U$ is Wedderburn-adapted. For nonsingular $A$, let $[\alpha_{kl}+3\beta_{kl}]=Q_{\mathrm{t}}R_{\mathrm{t}}$ and $[\alpha_{kl}]=Q_\sigma R_\sigma$ be ordinary $QR$ factorizations of these $2\times 2$ matrices. The factorization of Corollary~\ref{cor:qr-wedderburn} is then
\[
  U^TQU = Q_{\mathrm{t}}\oplus(Q_\sigma\otimes I_2),\qquad
  U^TXU = R_{\mathrm{t}}\oplus(R_\sigma\otimes I_2),
\]
where $R_\sigma\otimes I_2=\left(\begin{smallmatrix} r_{11}I_2 & r_{12}I_2\\ 0 & r_{22}I_2\end{smallmatrix}\right)$ is upper triangular. Transforming back, the $(k,l)$ block of $Q$ is
\[
  Q_{kl}=(Q_\sigma)_{kl}\,I_3+\bigl((Q_{\mathrm{t}})_{kl}-(Q_\sigma)_{kl}\bigr)\,ee^T,
\]
using $uu^T+vv^T=I_3-ee^T$, and similarly for $X$. Thus both factors visibly have the form $\alpha I_3+\beta\,zz^T$ blockwise, i.e., they lie in $\mathcal{C}(S_3,\rho)$.

\section{Open Questions}
\label{sec:open-questions}
\begin{question}[generalizing to other groups]
  Can we generalize this result to commutants $\mathcal{C}(G, \rho)$
  where $G$ is:
  \begin{itemize}
    \item a finitely generated group?
    \item a compact group (e.g., can we apply Peter--Weyl theory for compact groups)?
    \item $\dots$
  \end{itemize}
\end{question}

\begin{question}[beyond commutants]
  Some well-known classes of matrices, such as symmetric or Toeplitz matrices, are not commutants.
  For what other matrix classes can we obtain similar computability results for UBD?
\end{question}

\begin{question}[Weihrauch degrees]
 Corollary~\ref{cor:RA} shows that, for the semisimple algebras $\R[A]$, universal block diagonalizers are not computable from the generator alone.
 For commutants of finite group representations, is the UBD still computable when less discrete information is given, for instance only a basis of the commutant, or only the matrices $\rho(s)$ without the group?
 If not, what are the Weihrauch degrees of the corresponding problems?
\end{question}

\section{Conclusion}
\label{sec:conclusion}
In the theory of Type-2 linear algebra, there is a general known obstruction in calculating
the rank of a matrix that is given as a Cauchy-named input. The reason is the
discontinuity of the rank. It is known that the rank is lower semicomputable
but not computable \cite[Theorem~7]{ZieglerBrattka2004}.

In this paper, we studied a particular case in which the rank obstruction is avoidable.
We found that finite-group structure recovers the discrete information required to
obtain the ranks of the corresponding projectors.

We have proved the computability of universal isotypic block diagonalization. Given the
finite group data as input, along with the representation of the finite group,
there is a Type-2 machine that (uniformly) computes the universal block diagonalizer
for the matrices from the commutant tied to this input information.

We applied this result to the case when the representation is orthogonal by computing ordinary $QR$ factorizations blockwise in the isotypic basis. After transforming back to the original coordinates we obtain a $QX$-type factorization relative to the computed isotypic basis. Given a Wedderburn-adapted orthogonal basis, the transformed-back factors can moreover be computed within the commutant.

The results of this paper are a concrete instance of a phenomenon of using
additional structure from the given input to obtain computability.
The general theory of using additional input structure to obtain
computability is future work.

\end{document}